\date{}
\let\oldproofname=\proofname
\renewcommand{\proofname}{\rm\bf{\oldproofname}}
\newtheorem{theorem}{Theorem}[section]
\newtheorem{lemma}[theorem]{Lemma}
\newtheorem{proposition}[theorem]{Proposition}
\theoremstyle{definition}
\newtheorem{remark}[theorem]{Remark}
\newtheorem{definition}[theorem]{Definition}
\newtheorem{claim}[theorem]{Claim}
\newtheorem{ques}[theorem]{Question}
\newtheorem{example}[theorem]{Example}
\def\1{\mathbf{1}}
\newcommand{\F}{\mathcal{F}}
\newcommand{\cL}{\mathcal{L}}
\newcommand{\M}{\mathcal{M}}
\newcommand{\C}{\mathcal{C}}
\renewcommand{\H}{\mathcal{H}}
\newcommand{\I}{\mathcal{I}}
\def\tH{\text{H}}
\def\tmes{\text{mes}}
\def\rNC{\mathrm{NC}}
\def\del{\mathrm{del}}
\def\lk{\mathrm{lk}}
\newcommand{\mes}{\textrm{mes}}
\definecolor{blue1}{rgb}{0.10,0.60,0.8}
\begin{document}
	\title{Bounds for the collapsibility number of a simplicial complex and non-cover complexes of hypergraphs }
	\author{ Rekha Santhanam\footnote{Department of Mathematics, Indian Institute of Technology Bombay, India. reksan@iitb.ac.in}, Samir Shukla\footnote{School of Mathematical and Statistical Sciences, Indian Institute of Technology Mandi, India. samir@iitmandi.ac.in}, Anurag Singh\footnote{Department of Mathematics, Indian Institute of Technology Bhilai, India. anurags@iitbhilai.ac.in} }
	\maketitle
	
	\begin{abstract}
   The collapsibility number of simplicial complexes was introduced by Wegner in order to understand the intersection patterns of convex sets. This number also plays an important role in a variety of Helly type results.  We show that the non-cover complex of a hypergraph $\mathcal{H}$  is  $|V(\mathcal{H)}|- \gamma_i(\mathcal{H})-1$-collapsible, where $\gamma_i(\mathcal{H})$ is the generalization of independence domination number of a  graph to hypergraph. This   extends the result of Choi, Kim and Park from graphs to hypergraphs. Moreover, the upper bound in terms of strong independence domination number  given by Kim and Kim for the Leray number of the non-cover complex of a hypergraph can be obtained as a special case of our result.
   
   In general, there can be a large gap between the collapsibility number of a complex and its well-known upper bounds. 
   In this article, we construct a sequence of upper bounds $\mathcal{M}_k(X)$ for the collapsibility number of a simplicial complex $X$, which lie in this gap. We also show that the bound given by $\mathcal{M}_k$ is tight if the underlying complex is $k$-vertex decomposable. 
    \end{abstract}

	\noindent {\bf Keywords} : collapsibility, non-cover complexes, hypergraphs 

	\noindent 2020 {\it Mathematics Subject Classification:} 05C69, 05E45, 52B22, 52A35
	
	\vspace{.1in}

	\section{Introduction}
Let $X$ be a (finite) simplicial complex. Let $\gamma,\sigma \in X$ be such that  $|\gamma|\leq d$ and $\sigma \in X$ is the only maximal simplex that contains $\gamma$. Then, $(\gamma, \sigma)$ is called a {\it free pair} and $\gamma$ is called a {\it free face} of $\sigma$ in $X$.   An  {\it elementary $d$-collapse} of $X$ is the simplicial complex $X'$ obtained from $X$ by
removing all those simplices $\tau $  of $X$ such that
$\gamma \subseteq \tau \subseteq \sigma$, and we denote this elementary  $d$-collapse by  $
X \xrightarrow{\gamma} X'$.
The complex $X$ is called \emph{$d$-collapsible} if there exists a sequence of elementary $d$-collapses 
$$
X=X_1\xrightarrow{\gamma_1} X_2 \xrightarrow{\gamma_2} \cdots \xrightarrow{\gamma_{k-1}} X_k=\emptyset
$$
from $X$ to the empty complex $\emptyset$. Note that every $d$-dimensional complex is always $d+1$ collapsible. Clearly, if $X$ is $d$-collapsible and $d < c$, then $X$ is $c$-collapsible. The \emph{collapsibility number} of $X$, denoted as $\C(X)$,  is the minimal integer $d$ such that $X $ is $d$-collapsible.

The notion of d-collapsibility of simplicial complexes was introduced by Wegner \cite{Wegner75}.  The motivation of introducing d-collapsibility comes from combinatorial geometry as a tool for studying intersection patterns of convex sets 
\cite{Eckhoff,Kalai2005,Tancer2013, Wegner75}.  The collapsibility number  plays an important role in the study of   various Helly type results  \cite{ AhroniHolzmanJiang2019, Kalai1984, Kalai2005}. The collapsibility number is also  related to an another well-studied combinatorial inviariant of a simplicial complex called {\it Leray number} $\mathcal{L}(X)$ (\Cref{Leray}).   In \cite{Wegner75}, Wegner proved that $\cL(X) \leq \C(X)$ for any simplicial complex $X$.

A well known bound for $\C(X)$ which is useful in proving several results of collapsibility is given by $d(X,\prec)$ (see \Cref{prop:minimal_exclusion}).  In \cite{Choi2020}, Choi et al. studied the collapsibility number of non cover complexes of graphs using this bound. 
They showed that the collapsibility number of non-cover complex  $\rNC(G)$ of a graph $G$  is bounded by $ |V(G)|- \gamma_i(G)-1$, where $\gamma_i(G)$ denotes the independence domination number of $G$ as defined in \cite{AharoniSzabo} (the authors in \cite{Choi2020} use the notation $i\gamma(G)$ for $\gamma_i(G)$).
One of the objectives of our article is to extend the main result of \cite{Choi2020} from graphs to hypergraphs.

Let $\H$ be  a hypergraph.  A set $B \subseteq V(\H)$ is called a {\it cover} of $\H$ if $e \cap B \neq \emptyset$ for any edge $e \in E(\H)$. A set $A \subseteq V(\H)$ is called a {\it non-cover} if it is not a cover of $\H$.  The {\it non-cover complex} $\rNC(\H)$ of $\H$ is a simplicial complex defined as 
$$
\rNC(\H) = \{ A \subseteq V(\H): A \ \text{is a non-cover of} \  \H \}.
$$
  In \Cref{thm:bound for nc}, we prove that for any hypergraph $\H$, 
  $\C(\rNC(\H))$ is bounded above by the $ |V(\H)|- \gamma_i(\H)-1$,  where $\gamma_i(\H)$  is a  generalization of the independence domination number of  graphs to hypergraphs (see \Cref{def:dcn} for the definition).  
  
In \cite{KimKim2021}, authors showed that $\cL(\rNC(\mathcal{H})) \leq |V(\mathcal{H})| - \gamma_{si}(\mathcal{H}) -1$ whenever $|e| \leq 2$ for every $e \in E(\mathcal{H})$ (where $\gamma_{si}(\mathcal{H})$ is the strong independence domination number of a hypergraph $\mathcal{H}$). In \Cref{lem:si and kappa}, we show that $\gamma_i(\H)=\gamma_{si}(\mathcal{H})$ whenever $|e| \leq 2$ for every $e \in E(\mathcal{H})$. Thus, our result, \Cref{thm:bound for nc}, is an improvement on their result.
In the same paper \cite[Theorem 1.6]{KimKim2021},  the authors give similar upper bounds for the Leray number of a hypergraph $\H$ in terms of  $\tilde{\gamma}(\mathcal{H})$, the strong total domination number (when $|e|\leq 3$)  and  $\gamma_E(\mathcal{H})$, the edgewise-domination number (see Definitions \ref{defn:strong total domination number} and  \ref{defn:edgewise-domination number}).  
In \Cref{counterexample}, we give a class of hypergraphs, for which $$\gamma_i(\H) > \max\{\left \lceil{\tilde{\gamma}(\mathcal{H})}/{2}\right\rceil, \gamma_E(\H)\}.$$  
This example also shows that the gap in the above inequality can be arbitrarily large.
Since $\cL(X) \leq \C(X)$ for any simplicial complex $X$, our result, \Cref{thm:bound for nc}  
implies a more general result and often gives a better bound.


  Even though $d(X,\prec)$ is better suited for theoretical arguments, as displayed earlier, there can be a substantial gap between the bound obtained by $d(X,\prec)$ and $C(X)$.  In \cite{TC22}, Biyiko{\u{g}}lu and Civan, introduced a combinatorial invariant $\M(G)$ for any graph $G$ and extended  \cite{civan} it to any simplicial complex $X$. It can be shown that  $\C(X) \leq \M(X)$ always holds.  
In this article, we prove that $d(X,\prec)$ is also an upper bound for $\M(X)$. In \Cref{example1}, we show that $\M(X)$ can be strictly greater than $\C(X)$. 
We give sharper bounds for collapsibility by introducing a new combinatorial notion  $\M_k(X)$, for a simplicial complex $X$ and each non-negative integer $k$, and show that   $\C(X) \leq \M_k(X) \leq \M_{k-1}(X) \leq \ldots  \leq \M_1(X) \leq \M_{0}(X) = \M(X) \leq d(X, \prec)$ (see \Cref{remark:theta} and \Cref{thetakprop}). We prove that for $k$-vertex decomposable simplicial complexes $X$, $ \C(X) = \M_k(X)$ (see \Cref{thm:kvdecomposable}). We also give an example of a complex $X$,  where $\C(X) = \M_1(X) < \M(X)$ (see \Cref{example1}). Given the recursive nature of the definition of $\M_k(X)$, we expect it to be better suited for computational purposes in comparison with the computations of $d(X,\prec)$ which depends on the ordering of maximal simplices.


\section{Non-cover complexes of hypergraphs}\label{section3}

A {\it hypergraph} $\H$ is an ordered  pair $(V(\H), E(\H))$, where $V$ is a (finite) set and  $E$ is a family of subsets of $V$. The elements of $V(\H)$ are called the vertices of $\H$, and the elements of $E(\H)$ its edges.  Let $\H$ be a hypergraph. Let $v \in V(\H)$. A vertex $w \in V(\H)$ is a {\it neighbour} of $v$, if there exists $e \in E(\H)$ such that $\{v, w\} \subseteq e$. The neighbour set of $v$ is defined as $N(v)=  \{w : w \text{ is a neighbour of  } v\}$. For $A \subseteq V(\H)$, the neighbour set of $A$ is $N(A) := \bigcup_{v \in A} N(v)$. For $S \subseteq V(\H)$, the induced subgraph $\H[S]$ is the hypergraph on the vertex set $S$ and any $e \subseteq S$ is an edge in $\H[S]$ if $e \in E(\H)$. A vertex $w$ is called isolated if $N(w)=\emptyset$.
A set $\I \subseteq V(\H)$ is called independent if $e \not\subseteq \I$ for all $e \in E(\H)$.

Let $A \subseteq V(\H)$.   A set  $W \subseteq V(\H) \setminus A$ is said to be a dominating set  of  $A$, if for any $v \in A$, there exists $w\in W$ and $e \in E(\H)$ such that 
$v, w \in e$, {\it i.e.,} $A \subseteq N(W)$. The domination number of $A$ is
$$
\gamma_A(\H) = \min \{|W| : W \ \text{is a dominating set of } A\}.
$$

\begin{definition}\label{def:dcn}
 Let $\H$ be hypergraph with no isolated vertex.   The \emph{independence domination number} of $\H$ is 
     $$\gamma_i(\H) = \max \{\gamma_{\I}(\H):  \I \subset V(\H) \text{ is independent}   \}.$$

 \end{definition} 





\begin{remark}\label{remark:definitionofdominationnumber}
Observe that a set $D$ is a cover of $\H$ if and only if $\overline{D}$ is an independent set of $\H$.  Further, $W$ is a dominating set of $A$, if $ A \subseteq N(W)$.
 Therefore, the following is easy to observe:
$$\gamma_i(\H) = \max \{\gamma_{\overline{D}}(\H):  D \ \text{is a cover of} \  \H \}.$$
 \end{remark}

The next result gives an upper bound for the collapsibility number of non-cover complexes of hypergraphs in terms of their independence domination number.

\begin{theorem}\label{thm:bound for nc}
Let $\H$ be a hypergraph with no isolated vertices. Then
\[C(\rNC(\H))\leq |V(\H)|- \gamma_i(\H)-1.\]
\end{theorem}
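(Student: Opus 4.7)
The plan is to apply the minimal exclusion upper bound from \Cref{prop:minimal_exclusion}: for any linear order $\prec$ on the facets of a simplicial complex $X$, $\C(X)\leq d(X,\prec)$, where $d(X,\prec)$ is the maximum, over facets $\sigma$, of the smallest size of a face of $\sigma$ not contained in any earlier facet. I will construct an ordering of the facets of $\rNC(\H)$ that achieves $d(\rNC(\H),\prec)\leq n-k-1$, where $n:=|V(\H)|$ and $k:=\gamma_i(\H)$.

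First, observe that the facets of $\rNC(\H)$ are exactly the sets $\sigma_e:=V(\H)\setminus e$ with $e$ a minimal edge of $\H$: indeed $A$ is a non-cover iff $V(\H)\setminus A$ contains some edge, and $A$ is a maximal non-cover iff $V(\H)\setminus A$ is a minimal edge. For any ordering $e_1\prec e_2\prec\cdots\prec e_m$ of minimal edges,
\[
  d(\rNC(\H),\prec)=\max_{i}\min\bigl\{|A_i|:A_i\subseteq V(\H)\setminus e_i,\ A_i\cap e_j\neq\emptyset\ \forall j<i\bigr\},
\]
so, setting $B_i:=V(\H)\setminus A_i$, I need an ordering together with sets $B_i$ of size $\geq k+1$ with $e_i\subseteq B_i$ and $e_j\not\subseteq B_i$ for all $j<i$.

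Next I will fix an independent set $I\subseteq V(\H)$ achieving $\gamma_I(\H)=k$ and a minimum dominating set $W\subseteq V(\H)\setminus I$ of $I$, so $|W|=k$. The ordering will be designed from this data: roughly, minimal edges meeting $W$ heavily should appear later, so that when processing such an $e_i$ the earlier edges are ``light'' enough to let $B_i$ be built by enlarging $e_i$ with vertices chosen to break each earlier edge in turn. A natural candidate is a lexicographic ordering by a pair such as $(|e\cap W|,|e\cap I|)$, refined by a fixed witness map $f:I\to W$ assigning to each $v\in I$ a dominator.

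The main obstacle is proving that the extension $B_i$ of the required size always exists for each $i$: equivalently, that starting from $e_i$ one can add at least $k+1-|e_i|$ further vertices of $V(\H)\setminus e_i$ to obtain a set containing no earlier edge. The essential tool is the rigidity of $W$: no subset of $V(\H)\setminus I$ of size strictly less than $k$ dominates $I$. In the graph case treated by Choi, Kim and Park, this rigidity combined with the two-vertex structure of edges made the verification direct; in the hypergraph setting edges intersect $I$ and $W$ in varied patterns, so the main technical step will be a case analysis on $e_i\cap W$ together with a pigeonhole argument: any configuration of earlier edges obstructing the formation of $B_i$ will be shown to force a too-small subset of $V(\H)\setminus I$ to dominate $I$, contradicting the definition of $\gamma_i(\H)$.
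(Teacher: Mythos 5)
There is a genuine gap, and it occurs before the part you yourself flag as the ``main obstacle.'' Your identity $d(\rNC(\H),\prec)=\max_i\min\{|A_i|:A_i\subseteq V(\H)\setminus e_i,\ A_i\cap e_j\neq\emptyset\ \forall j<i\}$ misstates the quantity to which \Cref{prop:minimal_exclusion} applies, in two ways. First, $d(X,\prec)$ is a maximum over \emph{all} faces $\gamma\in X$, not just facets; for a subface $\gamma\subsetneq\gamma_i$ the sets $\gamma\cap e_j$ to be hit are smaller and the hitting set must live inside $\gamma$, so the required exclusion set can be strictly larger than for the facet $\gamma_i$ itself. Second, $M(\gamma,\prec)$ is produced by a specific greedy rule (reuse a previously chosen vertex when possible, otherwise take the minimum of $\gamma\setminus\gamma_k$), and the collapsibility theorem is proved for that greedy sequence. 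Your max--min expression is only a \emph{lower} bound for $d(X,\prec)$: exhibiting, for each facet, \emph{some} hitting set $A_i$ of size at most $|V(\H)|-\gamma_i(\H)-1$ does not bound the greedy sets $M(\gamma,\prec)$, hence does not bound the collapsibility number via \Cref{prop:minimal_exclusion}. Any argument along your lines must control the actual minimal exclusion sequences for every face, for a concretely specified order on both the vertices and the facets. Beyond this, the combinatorial core of your plan (the precise ordering and the existence of the extensions $B_i$) is deferred rather than carried out, so even under your reformulation the proof is incomplete.

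For comparison, the paper's proof avoids constructing hitting sets facet by facet. It fixes a \emph{minimal} cover $D$ with $\gamma_{\overline D}(\H)=\gamma_i(\H)$, relabels the vertices so that $D=\{1,\dots,|D|\}$ comes first, and orders the facets lexicographically by their complementary edges. For an arbitrary face $\tau$, it then uses the crude bound $|M(\tau,\prec)|\le|\tau|$ together with the observation that every vertex of $\tau\cap\overline D$ appearing in $M(\tau,\prec)$ lies in $N(\overline\tau\cap D)$; combining this with the domination inequality $|N(S)\cap\overline D|-|S|\le|\overline D|-\gamma_{\overline D}(\H)$ (\Cref{lem:neighbor_inequality}) gives $|M(\tau,\prec)|\le|V(\H)|-\gamma_i(\H)-\Psi_\tau$, and the final $-1$ comes from showing $\Psi_\tau\ge1$, after replacing $\tau$ by $\tau\cap D$ via \Cref{lem:mes_equal} in the case where the witnessing edge lies inside $D$. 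If you want to pursue your route, you would need to either prove a version of \Cref{prop:minimal_exclusion} valid for minimum hitting sets over facets (which is not what the cited theorem says) or redo your analysis for the greedy sequences over all faces.
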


Before proving \Cref{thm:bound for nc}, we review the minimal exclusion principle, which will play a key role in the proof of \Cref{thm:bound for nc}.

Let $X$ be a simplicial complex on  a linearly ordered vertex set $V$ and let $\prec: \gamma_1,\ldots,\gamma_m$ be a  linear ordering  on the maximal simplices  of $X$.
Given a $\gamma \in X$,   the \textit{minimal exclusion sequence} $\mes(\gamma, \prec)$ of elements of $\gamma$ is defined as follows:

Let $j$ denote the smallest index such that $\gamma \subseteq \gamma_j$.
If $j=1$, then $\mes(\gamma, \prec)$ is the null sequence.
If $j\geq 2$, then $\mes(\gamma, \prec)=(v_1,\ldots, v_{j-1})$ is a finite sequence of length $j-1$ such that
$v_1=\min (\gamma\setminus \gamma_1)$ and  for each $k\in\{2, \ldots, j-1\}$, 
$$v_k=\begin{cases}
\min(\{v_1,\dots,v_{k-1}\}\cap (\gamma \setminus \gamma_k)) & \text{if } \{v_1,\dots,v_{k-1}\}\cap (\gamma \setminus \gamma_k)\neq\emptyset,\\
\min (\gamma\setminus \gamma_k) & \text{otherwise.}
\end{cases} $$

Let $M(\gamma, \prec)$ denote the set of vertices appearing in $\mes(\gamma, \prec)$. Define
$$ d(X, \prec):=\max_{\gamma \in X}|M(\gamma, \prec)|.$$

The following result gives us a bound for the collapsibility number of the complex $X$ using  $d(X, \prec)$.
\begin{proposition}\cite[Theorem 6]{Lew18} \label{prop:minimal_exclusion}
	If $\prec$ is a linear ordering of the maximal simplices of $X$, then $X$ is $d(X, \prec)$-collapsible.
\end{proposition}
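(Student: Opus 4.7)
My plan is to prove this by induction on the number $m$ of maximal simplices of $X$. The base case $m=1$ is immediate: $X$ is a single simplex, $d(X,\prec)=0$, and the pair $(\emptyset,\gamma_1)$ witnesses a $0$-collapse $X\to\emptyset$.

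For the inductive step, set $d:=d(X,\prec)$. I first record the fundamental property of the minimal exclusion sequence: writing $\mes(\tau,\prec)=(v_1,\ldots,v_{j(\tau)-1})$ where $j(\tau)$ is the smallest index with $\tau\subseteq \gamma_{j(\tau)}$, the recursive rule forces each $v_k\in\tau\setminus\gamma_k$, so $M(\tau,\prec)\subseteq\tau$ and $M(\tau,\prec)\not\subseteq\gamma_k$ for every $k<j(\tau)$. Specialising to $\tau=\gamma_m$ shows that $\gamma_m$ is the unique maximal simplex of $X$ containing $M(\gamma_m,\prec)$, so $(M(\gamma_m,\prec),\gamma_m)$ is already a free pair of size at most $d$.

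My strategy is to use elementary $d$-collapses to reduce $X$ to the subcomplex $X^{\ast}:=\bigcup_{j<m}\gamma_j$, and then invoke the inductive hypothesis. The simplices that must disappear form $F_m:=\{\tau\in X:j(\tau)=m\}$, an upper set in the face poset of $\gamma_m$. By the fundamental property, every $\tau\in F_m$ satisfies $M(\tau,\prec)\in F_m$ with $M(\tau,\prec)\subseteq\tau$, and hence every minimum element $\tau_0$ of $F_m$ coincides with $M(\tau_0,\prec)$ and has $|\tau_0|\le d$. I would remove $F_m$ by iterated elementary collapses: at each stage pick a minimum element $\tau_0$ of the remaining portion of $F_m$ and collapse the pair $(\tau_0,\sigma)$, where $\sigma$ is the unique maximal simplex of the current complex containing $\tau_0$. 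Once $F_m$ is cleared, what remains is $X^{\ast}$ equipped with the restriction $\prec'$ of $\prec$; the minimal-exclusion computation for any $\tau\in X^{\ast}$ sees only $\gamma_1,\ldots,\gamma_{m-1}$ and produces the same sequence as in $\prec$, whence $d(X^{\ast},\prec')\le d$ and the inductive hypothesis finishes the argument.

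The main obstacle is the iterative removal of $F_m$: after a few collapses, the current complex acquires new maximal simplices (codimension-one faces of $\gamma_m$ exposed by the collapses), so the uniqueness of the maximal simplex above a newly chosen minimum element of the remaining $F_m$ is not automatic. Resolving this requires either ordering the minimum elements of $F_m$ consistently with a shelling of the Boolean interval $2^{\gamma_m}$ modulo $D:=\{\tau\subseteq\gamma_m:\tau\in X^{\ast}\}$, or, equivalently, repackaging the entire clearing procedure as a discrete Morse matching on $F_m$ whose critical cells are precisely its minimum elements. I would pursue the latter route by pairing each non-minimum $\tau\in F_m$ with $\tau\setminus\{v(\tau)\}$ for a canonical vertex $v(\tau)$ read off from the minimal-exclusion data of $\tau$ (for instance, the largest vertex of $\tau$ outside $M(\tau,\prec)$), verifying acyclicity of the induced matching, and converting it into a sequence of elementary $d$-collapses.
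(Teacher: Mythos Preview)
The paper does not supply its own proof of this proposition; it is quoted from \cite[Theorem 6]{Lew18}. From the way the same reference is invoked in the proof of \Cref{thm:theta_mes}, one can infer that Lew's argument runs by induction on the number of \emph{vertices}: for a suitably chosen vertex $v$ one checks that $d(\lk(v,X),\prec')\le d(X,\prec)-1$ and $d(\del(v,X),\prec'')\le d(X,\prec)$ for the induced orderings, and then appeals to the link--deletion bound of \Cref{lem:tancer}. Your induction on the number of \emph{maximal simplices}, peeling off $F_m$ and recursing on $X^\ast$, is a different and legitimate route---closer in spirit to the original Matou\v{s}ek--Tancer treatment---with the virtue that the collapses are exhibited explicitly rather than hidden inside Tancer's lemma.

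Your identified obstacle is real, however, and the proposed resolution does not work as stated. You suggest matching each non-minimum $\tau\in F_m$ with $\tau\setminus\{v(\tau)\}$, taking $v(\tau)$ to be the largest vertex of $\tau\setminus M(\tau,\prec)$. The problem is that a non-minimum $\tau\in F_m$ can satisfy $M(\tau,\prec)=\tau$, so $v(\tau)$ is undefined. Concretely, take $V=\{1,2,3,4\}$ with the natural order and facets $\gamma_1=\{3,4\}$, $\gamma_2=\{1,3\}$, $\gamma_3=\{1,2,4\}$. For $\tau=\{1,2\}\in F_3$ one finds $v_1=\min(\{1,2\}\setminus\{3,4\})=1$ and, since $1\in\gamma_2$, $v_2=\min(\{1,2\}\setminus\{1,3\})=2$; hence $M(\tau,\prec)=\{1,2\}=\tau$. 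Yet $\{2\}\subsetneq\tau$ also lies in $F_3$ (it is contained in neither $\gamma_1$ nor $\gamma_2$), so $\tau$ is not minimal. Any rule based on ``a vertex of $\tau$ outside $M(\tau,\prec)$'' misses such faces, and your matching is not well-defined on all of $F_m$. A correct execution of your strategy needs more of the $\mes$ structure: one can show that each fibre $M^{-1}(\mu)\cap F_m$ is a Boolean interval $[\mu,\mu^{+}]$ and then collapse these intervals in an order dictated by the sequences $\mes(\mu,\prec)$; alternatively, switching to the vertex induction sidesteps the difficulty entirely.
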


For a positive integer  $n$, let $[n]$ denotes the ordered set $\{1, \ldots, n\}$. For   $A \subseteq [n]$, we let $\overline{A} = [n] \setminus A$. In the rest of the section, we assume that $\H$ be a hypergraph with no isolated vertices,  $V(\H) = [n]$ and if $e = \{v_1,v_2,\dots,v_r\}\in E(\H)$ then $v_1>v_{2}>\cdots>v_{r-1} > v_r$. Let $<_{L}$ be the lexicographic order on the set $E(\H)$. For example, let $\H$ be a hypergraph on vertex set $[4]$ and edge set $\{\{3,2,1\},\{4,3,1\},\{4,3,2\}\}$, then $\{3,2,1\}<_{L}\{4,3,1\}<_{L}\{4,3,2\}$. 

One can observe that every facet of $\rNC(\H)$ is the complement of an edge of $\H$.  We define a linear order $
\prec$ on the set of facets of $\rNC(\H)$ as follows:  $\gamma \prec \gamma'$ if $\overline{\gamma} <_{L} \overline{\gamma'}$. Let $\gamma_1 \prec \gamma_2 \prec \ldots \prec \gamma_m$ be all the facets of $\rNC(\H)$. 
Note that, for any $ 1\leq i < j \leq m$, $\max{\overline{\gamma_i}}\leq \max{\overline{\gamma_j}}$.


 From \Cref{remark:definitionofdominationnumber}, there exists a cover $D$ of $\H$ such that  $\gamma_{\overline{D}}(\H)= \gamma_i(\H)$. If $D$ is not a minimal cover, then there exists some $v \in D$ such that $D- \{v\}$ is still a cover. Then $\overline{D-\{v\}} = \overline{D} \cup \{v\}$ and clearly $\gamma_{\overline{D-\{v\}}}(\H) \geq \gamma_{\overline{D}}(\H)$. Therefore,  we can assume that $D$ is a minimal cover. Without loss of generality, let $D=\{1, \ldots, |{D}|\}$.


\begin{lemma}\label{lem:mes_equal}
Let $\gamma, \gamma' \in \rNC(\H)$. If $\overline{\gamma} \cap D = \overline{\gamma'} \cap D$ and the induced subgraph $\H[\overline{\gamma} \cap D]$ contains an edge, then 
$$\tmes(\gamma, \prec) = \tmes(\gamma', \prec).$$
\end{lemma}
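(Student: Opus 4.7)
The plan is to show two things: first, that the stopping index $j$ in the definition of $\mes(\gamma,\prec)$ coincides with the stopping index for $\mes(\gamma',\prec)$; and second, that $\gamma \setminus \gamma_k = \gamma' \setminus \gamma_k$ for every $k < j$. Once these hold, a straightforward induction on $k$, using the recursive formula defining $v_k$, forces the two sequences to agree term by term.

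The crucial step is identifying the first facet in $\prec$ that contains $\gamma$. Set $X := \overline{\gamma}\cap D = \overline{\gamma'}\cap D$; by hypothesis $\H[X]$ has at least one edge, so we may let $e^\ast$ be the $<_L$-minimum edge of $\H[X]$, and let $j$ be the index with $e_j = e^\ast$. I claim $e^\ast$ is in fact the $<_L$-minimum edge of $\H$ contained in $\overline{\gamma}$ (and, symmetrically, in $\overline{\gamma'}$). Suppose for contradiction that some edge $e$ satisfies $e <_L e^\ast$ and $e \subseteq \overline{\gamma}$. Because $e^\ast \subseteq X \subseteq D = \{1,\dots,|D|\}$, we have $\max(e^\ast) \leq |D|$, and the lexicographic order on edges (written in decreasing form) gives $\max(e) \leq \max(e^\ast) \leq |D|$. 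Hence $e \subseteq D$, and therefore $e \subseteq \overline{\gamma} \cap D = X$, contradicting the minimality of $e^\ast$ in $\H[X]$. The same argument applied with $\overline{\gamma'}$ shows that the stopping index for $\gamma'$ is also $j$.

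With $j$ pinned down, fix $k < j$. Then $e_k <_L e^\ast$, so by the observation above $e_k \subseteq D$. Noting $\gamma \cap D = D\setminus X = \gamma'\cap D$, we compute
\[
\gamma \setminus \gamma_k \;=\; \gamma \cap e_k \;=\; (\gamma \cap D) \cap e_k \;=\; (D\setminus X)\cap e_k,
\]
and the identical computation for $\gamma'$ yields $\gamma' \setminus \gamma_k = (D\setminus X)\cap e_k$. Hence $\gamma\setminus\gamma_k = \gamma'\setminus\gamma_k$ for all $k<j$. Now $v_1 = \min(\gamma\setminus\gamma_1) = \min(\gamma'\setminus\gamma_1)$, and for $k\geq 2$ the value of $v_k$ is determined by $v_1,\dots,v_{k-1}$ and $\gamma\setminus\gamma_k$, all of which are common to $\gamma$ and $\gamma'$. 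An induction on $k$ finishes the proof. I do not foresee a real obstacle: the only delicate point is the lexicographic comparison showing that any edge preceding $e^\ast$ lies inside $D$, which is where the convenient labelling $D = \{1,\dots,|D|\}$ of the minimal cover is used essentially.
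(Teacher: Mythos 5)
Your proof is correct and follows essentially the same route as the paper's: both arguments hinge on the observation that any facet $\gamma_i$ preceding (in $\prec$) a facet whose complement lies in $D$ must itself satisfy $\overline{\gamma_i}\subseteq D$ (because $D=\{1,\dots,|D|\}$ and the lexicographic order is governed by maxima), after which $\gamma\cap D=\gamma'\cap D$ forces the stopping indices and all entries of the two exclusion sequences to coincide. Your version merely reorganizes this by pinning the stopping facet down as the $<_L$-minimal edge of $\H[\overline{\gamma}\cap D]$ rather than starting from the stopping index of $\gamma$, which is an equivalent bookkeeping choice.
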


\begin{proof}
Let $k$ be the smallest index such that $\gamma \subseteq \gamma_k$.  Since $\H[\overline{\gamma} \cap  D]$ contains an edge, let this edge be $\overline{\gamma_{t}}$ for some facet $\gamma_t$ of $\rNC(\H)$. 
Clearly, $t\geq k$ as $\gamma \nsubseteq \gamma_{i}$, {\it i.e.}, $\overline{\gamma_{i}}\nsubseteq \overline{\gamma}$ for any $i < k$. Since $\overline{\gamma_t}\subseteq D$, $\max(\overline{\gamma_t})<|{D}|$. 
Thus, $\gamma_i\prec \gamma_t$ (equivalently $\overline{\gamma_i}<_L \overline{\gamma_t}$) implies that $\max(\overline{\gamma_i})<|{D}|$ for all $i \leq t$, and therefore $\overline{\gamma_i}\subseteq D$ for all $i\leq t$. In particular, $\overline{\gamma_k}$ is an edge in $\H[\overline{\gamma} \cap  D]$.
It is given that $\overline{\gamma} \cap D = \overline{\gamma}'\cap D$, implying that $\gamma \cap D = \gamma'\cap D$. Hence, for each $i \in [k]$,
we get that 
$$\overline{\gamma_i}\cap \gamma =\overline{\gamma_i}\cap \gamma\cap D =\overline{\gamma_i}\cap \gamma' \cap D=\overline{\gamma_i}\cap \gamma'.$$

Therefore, we conclude that  $k$ is the smallest index such that $\overline{\gamma_k} \subseteq \gamma'$, {\it i.e.}, $\gamma' \subseteq \gamma_k$ and for every $i \in [k-1]$, the $i$th entry  of $\mes(\gamma', {\prec})$ is equal  to the $i^{th}$ entry of $\mes(\gamma, {\prec})$.
\end{proof}

\begin{lemma}\label{lem:neighbor_inequality}
For any $S \subseteq D$,
$$ |N(S) \cap \overline{D}| -|S| \leq |\overline{D}| - \gamma_{\overline{D}}(\H). $$
\end{lemma}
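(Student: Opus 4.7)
The plan is to rewrite the desired inequality as a statement about the existence of a small dominating set of $\overline{D}$, and then construct such a dominating set explicitly using $S$ together with a choice of neighbors for the vertices of $\overline{D}$ not already dominated by $S$.

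First I would rearrange the inequality to the equivalent form
\[
\gamma_{\overline{D}}(\H) \leq |S| + |\overline{D}| - |N(S) \cap \overline{D}| = |S| + |\overline{D} \setminus N(S)|,
\]
so it suffices to exhibit a dominating set of $\overline{D}$ of size at most $|S| + |\overline{D} \setminus N(S)|$ (recall such a set must lie inside $V(\H) \setminus \overline{D} = D$).

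The key preliminary observation is that every vertex $v \in \overline{D}$ has a neighbor lying in $D$. Indeed, $\H$ has no isolated vertices, so $v$ lies in some edge $e \in E(\H)$; since $D$ is a cover, $e \cap D \neq \emptyset$, and any such vertex is distinct from $v$ (as $v \notin D$) and is therefore a neighbor of $v$ in $D$. Using this, for each $v \in \overline{D} \setminus N(S)$, I would fix a choice $f(v) \in D$ that is a neighbor of $v$, and set $T = \{f(v) : v \in \overline{D} \setminus N(S)\} \subseteq D$.

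Finally, I would verify that $W := S \cup T$ is a dominating set of $\overline{D}$: every vertex in $N(S) \cap \overline{D}$ is dominated by some element of $S$ by definition of $N(S)$, and every vertex $v \in \overline{D} \setminus N(S)$ is dominated by $f(v) \in T$ by construction. Since $W \subseteq D = V(\H) \setminus \overline{D}$, the inequality
\[
\gamma_{\overline{D}}(\H) \leq |W| \leq |S| + |T| \leq |S| + |\overline{D} \setminus N(S)|
\]
then yields the claim. There is no real obstacle here; the entire content is the observation that a cover $D$ plus the no-isolated-vertex hypothesis forces each vertex of $\overline{D}$ to have a dominator inside $D$, which allows one to patch up the vertices not already covered by $N(S)$ at cost at most $|\overline{D} \setminus N(S)|$.
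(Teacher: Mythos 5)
Your proof is correct and follows essentially the same route as the paper: both arguments extend $S$ to a dominating set of $\overline{D}$ contained in $D$ by adding at most one vertex for each element of $\overline{D}\setminus N(S)$, relying on the fact that a cover $D$ plus the no-isolated-vertex hypothesis gives every vertex of $\overline{D}$ a neighbour in $D$. Your explicit choice function $f$ just makes transparent the step the paper phrases as ``any minimal dominating set of $A$ has cardinality at most $|A|$''; the content is identical.
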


\begin{proof}
If $\overline{D} \subseteq N(S)$, then by definition of $\gamma_{\overline{D}}(\H)$, $|S| \geq \gamma_{\overline{D}}(\H)$ and result follows. 
So assume that  $\overline{D}\nsubseteq N(S)$. If $S =  D$, then $\overline{D} \subseteq N(S)$ and therefore assume  that $ D \setminus S \neq \emptyset$. 
Let $W$ be a minimal cardinality set such that $S \subsetneq W \subseteq D$ and $\overline{D} \subseteq N(W)$, {\it i.e.,} $W\setminus S$ is a minimal cover of $\overline{D}\setminus N(S)$. Then  $\gamma_{\overline{D}}(\H) \leq |W|$. Given any set $A$, it is clear that the cardinality of any minimal dominating set of $A$ is always less than or equal to the cardinality $A$. Therefore, 
$|W|- |S| \leq |\overline{D} | - |N(S) \cap \overline{D}| $, and hence the result follows.
\end{proof}

We now prove the  main result of this section. This is done by extending the idea of \cite{Choi2020} to hypergraphs.

\begin{proof}[Proof of \Cref{thm:bound for nc}]
 For a  $\sigma \in \rNC(\H)$,  we let $\Psi_{\sigma}=|N(\overline{\sigma}\cap D)\cap \overline{\sigma} \cap \overline{D}|$.

Let $\tau \in \rNC(\H)$. We first show that  for $v \in \tau  \cap \overline{D}$, if $v \in  M(\tau , \prec)$, then $v$ is a neighbour of some vertex in
$\overline{\tau } \cap  D$. Let $k$ be the smallest index such that the $k^{th}$ entry of $\mes(\tau , \prec)$ is $v$. Then $v \in \tau  \setminus \gamma_k$,
which implies that $v \in \overline{\gamma_k}$.  Since  $D$ is a cover and $\overline{\gamma_k}\in E(\H)$,  $\overline{\gamma_k} \cap D \neq \emptyset$. Choose a $w \in \overline{\gamma_k} \cap D$. Since $w \in D$  and $v \notin D, w < v$ (recall that $D=\{1,2,\dots,|D|\}$).  Further,  since $v$ is the $k^{th}$ entry of $\mes(\tau , \prec)$ and $w<v$, we get  $w \notin \tau $ which implies that $w \in \overline{\tau } \cap D$. Furthermore, since $v, w \in \overline{\gamma_k}$, $v$ is a neighbour of $w$.  Hence $ \tau  \cap \overline{D} \subseteq N(\overline{\tau} \cap D)$.

Therefore, 
\begin{align}\label{eq:mes}
|M(\tau , \prec)| &\le |\tau| = |\tau  \cap D|  + |\tau\cap \overline{D}| \nonumber \\
& = |\tau  \cap D|  + |N(\overline{\tau }\cap D)\cap (\tau  \cap \overline{D})| \nonumber \\
&= |D|-|\overline{\tau }\cap D| + |N(\overline{\tau }\cap D)\cap \overline{D}|-|N(\overline{\tau}\cap D)\cap \overline{\tau} \cap \overline{D}|\nonumber\\ 
&= |D|-|\overline{\tau }\cap D| + |N(\overline{\tau }\cap D)\cap \overline{D}|-\Psi_{\tau }\nonumber\\ 
 &\le |D|-\gamma_{\overline{D}}(\H)+|\overline{D}|-\Psi_{\tau } \nonumber\\
 &=|V(\H)|-\gamma_{\overline{D}}(\H)-\Psi_{\tau },
\end{align}
where the second equality follows from the fact that $ \tau  \cap \overline{D} \subseteq N(\overline{\tau} \cap D)$ and  last inequality holds by applying \Cref{lem:neighbor_inequality} to the set $\overline{\tau }\cap D$.

By \Cref{prop:minimal_exclusion}, it is sufficient to show that $\Psi_{\tau} \geq 1$. Suppose that $\Psi_{\tau }=0$. Since $\tau \in \rNC(\H)$, there exist an edge $e $ such that $e \subseteq \overline{\tau }$. If $e \cap \overline{D} \neq \emptyset$, then  $N(\overline{\tau }\cap D)\cap \overline{\tau } \cap \overline{D} \neq \emptyset$ (since $e\cap D\neq \emptyset$ as $D$ is a cover) and therefore $\Psi_{\tau } \geq 1$. 
Else, $e \subseteq D$ and  $\H[\overline{\tau} \cap D]$ has the edge $e$.
Let $\tau '=\tau \cap D$.
Then $\overline{\tau }\cap D=\overline{\tau'}\cap D$.
By \Cref{lem:mes_equal}, $\mes(\tau, \prec)=\mes(\tau', \prec)$ and therefore
$M(\tau , \prec)=M(\tau', \prec)$.
Note that $\Psi_{\tau'} = |N(\overline{\tau' }\cap D)\cap \overline{\tau'}\cap \overline{D}|= |N(\overline{\tau }\cap D)\cap \overline{D}|$ (since $\overline{D}\subseteq\overline{\tau'}$). We now show that $\Psi_{\tau'}\geq 1$. Recall that, $e\subseteq \overline{\tau}\cap D$. Let $v$ be a vertex in $e$. Then  $v \in \overline{\tau}\cap D$. Since $D$ is a minimal cover, there exists an edge $e'$ in $\H[\overline{D\setminus \{v\}}]$. Thus, $e'\setminus \{v\} \subseteq N(\overline{\tau }\cap D)\cap \overline{D}$ implying that $\Psi_{\tau'}\geq 1$.

Thus, by replacing $\tau $ by $\tau'$ and using \Cref{eq:mes},  we conclude that $|M(\tau, \prec)|  \leq  |V(\H)|-\gamma_i(\H)-1$. Hence, the definition of $d(X, \prec)$ along with \Cref{prop:minimal_exclusion} implies the following. 
\begin{equation}\label{equation:mainresult}
    C(\rNC(\H))\leq d(\rNC(\H), \prec) \leq |V(\H)|- \gamma_i(\H)-1.
\end{equation}
This completes the proof of \Cref{thm:bound for nc}.
\end{proof}






We now compare \Cref{thm:bound for nc} with the results of Kim and Kim \cite{KimKim2021}, where they established upper bounds on the Leray number of $\rNC(\H)$ with various domination parameters of the hypergraphs $\H$. To do this comparison, we first recall the required terminology from \cite{KimKim2021}.

Let $\mathcal{H}$ be a hypergraph.
Let $v \in V(\H)$  and $B$ be a subset of $V(\H)$. Then 
 $B$ {\em strongly totally dominates} $v$ if there exists $B' \subseteq B \setminus \{v\}$ such that $B' \cup \{v\} \in E(\H)$.  

	Let $W$ be a subset of $V(\H)$.
	If $B \subseteq V$ strongly totally dominates every vertex in $W$, then   $B$ is said to be  {\em strongly dominates} $W$.

	The {\em strong total domination number of $W$ in $\mathcal{H}$} is defined as 
\[\gamma(\mathcal{H}; W) := \min\{|B|:B\subseteq V(\H), ~B\text{ strongly dominates } W\}.\]

\begin{definition}\label{defn:strong total domination number}
	The {\em strong total domination number} $\tilde{\gamma}(\mathcal{H})$ of $\mathcal{H}$ is the strong total domination number of $V(\H)$, {\it i.e.}, $\tilde{\gamma}(\mathcal{H}) = \gamma(\mathcal{H}; V(\H))$.
\end{definition}
	
A set 	$ \I \subseteq V(\H)$ is said to be {\em strongly independent} in $\mathcal{H}$ if it is independent and every edge of $\mathcal{H}$ contains at most one vertex of $\I$.	
\begin{definition}\label{defn:strong independence domination number}
The {\em strong independence domination number} of $\mathcal{H}$ is the integer
\[\gamma_{si}(\mathcal{H}) := \max\{\gamma(\mathcal{H}; \I): \I \text{ is a strongly independent set of }\mathcal{H}\}.\]
\end{definition}

\begin{definition}\label{defn:edgewise-domination number}
	The {\em edgewise-domination number} of $\mathcal{H}$ is the minimum number of edges whose union strongly dominates the $V(\H)$, i.e.
	\[\gamma_{E}(\mathcal{H}) := \min\{|\F|: \F \subseteq E(\H), \bigcup_{e \in \mathcal{F}} e\text{ strongly dominates }V(\H)\}.\]

\end{definition}

	\begin{theorem}\cite[Theorem 1.6]{KimKim2021} \label{leray numbers}
	Let $\mathcal{H}$ be a hypergraph with no isolated vertices.
	Then 
	\begin{enumerate}
	\item[(i)] If $|e| \leq 3$ for every $e \in E(\mathcal{H})$, then $L(\rNC(\mathcal{H})) \leq |V(\mathcal{H})| - \left \lceil\frac{\tilde{\gamma}(\mathcal{H})}{2}\right\rceil -1$.
	\item[(ii)] If $|e| \leq 2$ for every $e \in E(\mathcal{H})$, then $L(\rNC(\mathcal{H})) \leq |V(\mathcal{H})| - \gamma_{si}(\mathcal{H}) -1$.
	\item[(iii)] $\cL(\rNC(\mathcal{H})) \leq |V(\mathcal{H})| - \gamma_{E}(\mathcal{H}) -1$.
	\end{enumerate}
	\end{theorem}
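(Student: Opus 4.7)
The plan is to deduce all three Leray bounds from collapsibility bounds via $\cL(X)\le \C(X)$, and to prove the collapsibility bounds by the minimal exclusion principle (\Cref{prop:minimal_exclusion}) with facet orderings modeled on the proof of \Cref{thm:bound for nc}.

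Part (ii) should follow immediately from \Cref{thm:bound for nc}: when $|e|\le 2$ for every edge, $\H$ is essentially a graph, every independent set is strongly independent, and the domination $\gamma_{\overline{D}}(\H)$ coincides with the strong domination of $\overline{D}$; hence $\gamma_i(\H)=\gamma_{si}(\H)$, and the desired Leray bound drops out.

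For part (iii), I would take $k=\gamma_E(\H)$ with witnessing edges $e_1,\dots,e_k$ whose union $D$ strongly dominates $V(\H)$, relabel the vertices so that $D=\{1,\dots,|D|\}$, and reuse the lexicographic ordering on facets of $\rNC(\H)$ from the proof of \Cref{thm:bound for nc}. The structural observation that $v\in M(\tau,\prec)\cap\overline{D}$ implies $v\in N(\overline{\tau}\cap D)$ carries over verbatim. The replacement for \Cref{lem:neighbor_inequality} would be the inequality
\[|N(S)\cap\overline{D}|-|S|\le |\overline{D}|-k\quad\text{for every }S\subseteq D,\]
which I plan to prove by a greedy edge-selection argument: enlarge $S$ one witnessing edge $e_i$ at a time until the augmented set strongly dominates $V(\H)$, charging at most one unit of $|S|$ per edge added.

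For part (i), the argument is analogous, but a strong total dominating set $B$ of size $\tilde{\gamma}(\H)$ can be organized into pairs of vertices sharing a common size-$3$ witnessing edge; each such pair mutually strongly dominates through the third vertex of that edge, halving the effective count and yielding the $\lceil\tilde{\gamma}(\H)/2\rceil$ factor. The main obstacle throughout is replacing the minimal-cover argument used in \Cref{thm:bound for nc} to verify $\Psi_\tau\ge 1$ with an edge-based counterpart: for (iii) this should be handled by combining \Cref{lem:mes_equal} with the strong-domination property of each $e_i$, while for (i) the size-$3$ constraint is essential for sustaining the pairing bookkeeping.
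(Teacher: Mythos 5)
First, a framing point: the paper does not prove this theorem at all --- it is quoted from \cite[Theorem 1.6]{KimKim2021}, where it is established by homological arguments on non-cover complexes. The only part the present paper re-derives is (ii), and there your plan coincides exactly with the paper's: \Cref{lem:si and kappa} gives $\gamma_i(\H)=\gamma_{si}(\H)$ when all edges have size at most $2$, and then \Cref{thm:bound for nc} together with Wegner's inequality $\cL(X)\le\C(X)$ yields (ii). That portion of your proposal is correct.

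Your arguments for (i) and (iii), however, aim at strictly stronger collapsibility bounds and both have genuine gaps. The step you say ``carries over verbatim'' does not: in the proof of \Cref{thm:bound for nc}, the implication $v\in M(\tau,\prec)\cap\overline{D}\Rightarrow v\in N(\overline{\tau}\cap D)$ relies on $D$ being a vertex \emph{cover}, so that the edge $\overline{\gamma_k}$ responsible for $v$ meets $D$ in a vertex $w<v$. The union $D=e_1\cup\dots\cup e_k$ of an edgewise-dominating family need not be a cover: for $V(\H)=\{1,2,3,4\}$ and $E(\H)=\{\{1,2\},\{1,3\},\{2,4\},\{3,4\}\}$ one has $\gamma_E(\H)=1$ with witness $\{1,2\}$, yet the edge $\{3,4\}$ is disjoint from $D=\{1,2\}$; the same problem arises for a strong total dominating set in (i). Next, your proposed replacement for \Cref{lem:neighbor_inequality} is false as stated: taking $S=\emptyset$ it asserts $|\overline{D}|\ge\gamma_E(\H)$, which already fails for the hypergraph consisting of the single edge $\{1,2\}$ on two vertices (there $\overline{D}=\emptyset$ and $\gamma_E=1$), and the greedy ``charge one unit per edge'' sketch does not control how many vertices each added edge contributes. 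The pairing bookkeeping proposed for (i) is not an argument at this level of detail. Finally, note that the paper's \Cref{counterexample} is there precisely because the parameters $\gamma_i$, $\lceil\tilde{\gamma}/2\rceil$ and $\gamma_E$ are not comparable in general, so (i) and (iii) cannot be deduced from \Cref{thm:bound for nc}; whether their collapsibility analogues even hold is not settled by anything in this paper, and closing these gaps would require genuinely new ideas rather than a transplant of the $\gamma_i$ argument.
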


The following lemma shows that  \Cref{leray numbers} $(ii)$ is a special case of  \Cref{thm:bound for nc}.
 
\begin{lemma}\label{lem:si and kappa}
Let $\H$ be a hypergraph. If $|e| \leq 2$ for all $e \in E(\H)$, then $\gamma_i(\H) = \gamma_{si}(\H)$.
\end{lemma}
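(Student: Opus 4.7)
The plan is to prove equality by showing that, under the hypothesis $|e|\le 2$, both the class of sets over which the two maxima range and the per-set domination number match on the two sides.

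The first step is to verify that a set $\I\subseteq V(\H)$ is strongly independent if and only if it is independent. One direction is immediate from the definition. For the converse, independence rules out any edge contained in $\I$, so a size-$2$ edge $\{u,v\}$ meets $\I$ in at most one vertex, and a size-$1$ edge $\{v\}$ forces $v\notin\I$. Thus $\gamma_i(\H)$ and $\gamma_{si}(\H)$ are maxima of $\gamma_{\I}(\H)$ and $\gamma(\H;\I)$ respectively over the same family of sets $\I$, and the lemma reduces to proving $\gamma_{\I}(\H)=\gamma(\H;\I)$ for each independent $\I$.

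For $\gamma(\H;\I)\le\gamma_{\I}(\H)$: take a minimum $W\subseteq V(\H)\setminus\I$ with $\I\subseteq N(W)$. For each $v\in \I$ pick $w\in W$ sharing an edge $e$ with $v$; since $|e|\le 2$ and $w\ne v$, we must have $e=\{v,w\}$, so $B'=\{w\}\subseteq W\setminus\{v\}$ strongly totally dominates $v$. Hence $W$ itself strongly dominates $\I$, witnessing $\gamma(\H;\I)\le|W|=\gamma_{\I}(\H)$.

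The reverse direction $\gamma_{\I}(\H)\le\gamma(\H;\I)$ is the main step, and the crux is to show that a minimum strongly dominating set $B$ may be replaced by $B\setminus\I$. A vertex $v\in B\cap \I$ cannot appear in any witness $B'\cup\{u\}\in E(\H)$ for a vertex $u\in \I$, because $|B'\cup\{u\}|\le 2$ would then force $\{v,u\}\in E(\H)$, an edge contained in $\I$ and so contradicting independence. Hence dropping $\I$ from $B$ preserves the strong domination of $\I$, and since $B\setminus\I$ is disjoint from $\I$ it is a dominating set of $\I$ in the paper's sense, giving $\gamma_{\I}(\H)\le|B\setminus\I|\le|B|=\gamma(\H;\I)$. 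The argument overall is short; the removal of $B\cap\I$ is the only place where the hypothesis $|e|\le 2$ is used in an essential way, and is the main point of the proof.
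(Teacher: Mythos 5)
Your proof is correct and follows essentially the same route as the paper: both arguments rest on the twin observations that, when $|e|\le 2$, independent sets are automatically strongly independent and that dominating sets of an independent set can be exchanged with strongly dominating sets of the same size. If anything, your version is slightly more careful than the paper's, which dismisses $\gamma_i(\H)\ge\gamma_{si}(\H)$ as ``clear from definitions'' and treats only the maximizing set $\overline{D}$ for the reverse inequality, whereas you establish the pointwise identity $\gamma_{\I}(\H)=\gamma(\H;\I)$ for every independent $\I$.
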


\begin{proof}

Since every strongly independent set is independent, it is clear from definitions of $\gamma_i(\H)$ and $\gamma_{si}(\H)$ that $\gamma_i(\H) \geq \gamma_{si}(\H)$. We now show that $\gamma_{si}(\H) \geq \gamma_i(\H)$.

Let $D$ be a cover of $\H$ such that $\gamma_{\overline{D}}(\H) = \gamma_i(\H)$. Since $D$ is a cover, for each cardinality one edge $\{x\}$, $x \in D$. Further, since $|e| \leq 2$ for all $e \in E(\H)$,  we see that   $\overline{D}$ is an strongly independent set. Let $S \subseteq D$ such that $|S| = \gamma_i(\H)$ and $\overline{D}  \subseteq N(S)$. Then $S$  will be of minimal cardinality which strongly dominates $\overline{D}$. Hence $\gamma_{si}(\H) \geq |S| = \gamma_i(\H)$. 
\end{proof}

We now give an example of a class of hypergraphs for which the difference $\gamma_i(\H) - \max\{\left \lceil{\tilde{\gamma}(\mathcal{H})}/{2}\right\rceil, \gamma_E(\H)\}$ can be made arbitrarily large.

\begin{example}\label{counterexample}
Let $n \geq 2$ and  let $H_1, H_2, \ldots, H_n$ be $n$ distinct  star graphs, where center vertex of the graph $H_i$ is $a_i$ for $1 \leq i \leq n$. Let $\H$ be hyper graph on vertex set $V(H_1) \cup \ldots \cup V(H_n)$ and edge set $E(\H) = E(H_1) \cup \ldots \cup E(H_n) \cup \{\{a_i, a_{i+1}\} : 1 \leq i \leq n-1\} \cup \{\{a_1, \ldots, a_n\}\}$. 
Then it is easy to check that  $\I = V(\H) \setminus \{a_1, \ldots, a_n\} $ is an independent set and $\{a_1, \ldots, a_n\}$ is the minimum dominating set of $\I$. Therefore $\gamma_{\I}(\H) = n$ and $\gamma_i(\H) \geq n$. 
 Since the set $\{a_1, \ldots, a_n\}$ strongly dominates $V(\H)$ and it is an edge in $\H$, we conclude that $\gamma_E(\H) = 1$ and $\tilde{\gamma}(\H) \leq n$. 
\end{example}


\section{The $\M_k$ number of a complex}

Tancer \cite{Tancerstrongd} showed that the collapsibility number of a simplicial complex is bounded by the collapsibility number of link and deletion of $X$ with respect to any vertex $v$.  This allows for inductive arguments to find the bounds on collapsibility number of a simplicial complex \cite{Lew18}. 

For any simplicial complex $X$ and $\sigma \in X$, the subcomplexes \emph{link} and \emph{deletion} of $\sigma$ in $X$ are defined as follows
	\begin{equation*}
	\begin{split}
	\mathrm{lk}(\sigma,X) & = \{\tau \in X : \sigma \cap \tau = \emptyset,~ \sigma \cup \tau \in X\}, \\
	\mathrm{del}(\sigma,X) & = \{\tau \in X : \sigma \nsubseteq \tau\}.
	\end{split}
	\end{equation*}

 Biyiko{\u{g}}lu and Civan \cite{TC22} defined $\M(G)$ inductively for any graph $G$ and then extended it for any simplicial complex $X$ as follows (\cite{civan}). 
 \begin{definition}
     Let $X^o$ denote the set of vertices $v$ in $X$ such that $\lk(v, X) \neq \del(v, X)$. Define 

\begin{equation*}
\M(X)=\begin{cases}
0 & \text{if } X^o=\emptyset,\\
 \min\limits_{v \in X^0}\{\max \{\M(\lk(v, X))+1 , \M(\del(v, X)\}\} & \text{otherwise}.
\end{cases} \end{equation*}

 
 \end{definition}

They \cite{civan} showed   that $\C(X) \leq \M(X)$ for all $X$.  In this article, we  introduce a sequence of invariants $\M_k(X)$ which lie between $\C(X)$ and $\M(X)$, where $\M_0(X) = \M(X)$ and show that $\C(X) \leq \M_k(X)$ for each $k \geq 0$ (see \Cref{thetakprop}).  

We first introduce the notation  we use in the rest of  the paper. Let $X$ be a simplicial complex. We denote the set of vertices of $X$ by $V(X)$.  For $A \subseteq V(X)$, the induced subcomplex on  vertex set $A$ is  $X[A] = \{\sigma \in X : \sigma \subseteq A\}$. For  $k \geq 0$, we let $X_{(k)}$ denote the set of $k$-dimensional faces of $X$ and  
$$X_{(k)}^o=\{ \sigma \in X_{(k)} : \lk{(\sigma, X)} \neq X[(V(X)\setminus V(\sigma)] \}.$$ 

\begin{lemma}\label{lemma:conesimplex}
Let $X$ be a simplicial complex of dimension at least $k$. If $X_{(k)}^o = \emptyset$, then $X$ is a simplex.
\end{lemma}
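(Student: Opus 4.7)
The first move is to reinterpret the hypothesis. Since $\lk(\sigma,X)\subseteq X[V(X)\setminus V(\sigma)]$ always holds, the condition $X_{(k)}^o=\emptyset$ is equivalent to the statement that for every $k$-face $\sigma$ and every face $\tau\in X$ with $\tau\cap\sigma=\emptyset$, we have $\sigma\cup\tau\in X$. I will use this reformulation throughout. The goal is to show $V(X)\in X$, which, since simplicial complexes are closed under subsets, is equivalent to $X$ being a simplex on $V(X)$. Since $\dim X\geq k$, a $k$-face $\sigma_0$ exists, so $|V(X)|\geq k+1$.

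\textbf{Step 1 (swap lemma): every $(k+1)$-subset of $V(X)$ is a $k$-face.} Fix a $k$-face $\sigma_0$ and an arbitrary $(k+1)$-subset $T\subseteq V(X)$. Enumerate $\sigma_0\setminus T=\{v_1,\dots,v_s\}$ and $T\setminus\sigma_0=\{t_1,\dots,t_s\}$ (same cardinality since $|T|=|\sigma_0|=k+1$), and define
\[
\sigma_l=(\sigma_0\setminus\{v_1,\dots,v_l\})\cup\{t_1,\dots,t_l\},\qquad l=0,1,\dots,s,
\]
so $\sigma_s=T$. I will show by induction on $l$ that each $\sigma_l$ is a $k$-face. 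Given that $\sigma_{l-1}$ is a $k$-face, the vertex $t_l$ lies outside $\sigma_{l-1}$, and $\{t_l\}\in X$, so by the hypothesis $\sigma_{l-1}\cup\{t_l\}\in X$; removing the vertex $v_l\in\sigma_{l-1}$ gives $\sigma_l$, which is a face (as a subset of a face) of cardinality $k+1$.

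\textbf{Step 2 (extension to all subsets).} Proceed by strong induction on $|S|$ for $S\subseteq V(X)$. When $|S|\leq k+1$, embed $S$ in a $(k+1)$-subset and invoke Step 1. When $|S|\geq k+2$, choose any $(k+1)$-subset $\sigma\subseteq S$; by Step 1 this is a $k$-face. Set $\tau=S\setminus\sigma$; then $|\tau|<|S|$, so by the induction hypothesis $\tau\in X$. Since $\tau\cap\sigma=\emptyset$, the reformulated hypothesis yields $S=\sigma\cup\tau\in X$. In particular, $V(X)\in X$, so $X$ equals the full simplex on $V(X)$.

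\textbf{Main obstacle.} The only nontrivial part is Step 1, the interpolation from $\sigma_0$ to an arbitrary $(k+1)$-subset $T$. The hypothesis only lets us enlarge a $k$-face by adjoining a disjoint face; enlarging never directly produces a new $k$-face. The trick is that after enlarging to a $(k+2)$-set we may take any $(k+1)$-subset, and this extra freedom is exactly what lets us swap one vertex of $\sigma_0\setminus T$ for one of $T\setminus\sigma_0$ at a time while staying in dimension $k$. Once Step 1 is in hand, Step 2 is an easy induction.
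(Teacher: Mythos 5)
Your proof is correct, and it takes a genuinely different route from the paper's. The paper argues via join decompositions: for a $k$-face $\sigma$ the hypothesis gives $X=\lk(\sigma,X)\ast\sigma$, and one then shows $\lk(\sigma,X)_{(0)}^o=\emptyset$ (by a one-vertex swap much like the one in your Step 1) so as to reduce to the $k=0$ case, which is handled separately by induction on the number of vertices using the cone structure $X=(X-\{v\})\ast\{v\}$. You instead work entirely at the level of subsets of $V(X)$: the exchange argument of Step 1 shows every $(k+1)$-subset is a face, and the strong induction of Step 2 then assembles arbitrary subsets as a $k$-face union a disjoint smaller face, concluding that $V(X)\in X$. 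Both proofs hinge on the same reformulation of $X_{(k)}^o=\emptyset$ and on the vertex-swap trick, but your version avoids the reduction to $k=0$ and the nested induction, making it more self-contained and arguably cleaner; the paper's version has the minor advantage of exhibiting the cone/join structure of $X$ explicitly, which is in the spirit of how $X_{(k)}^o$ is used elsewhere in that section. All the small verifications in your argument check out: $t_l\notin\sigma_{l-1}$ and $v_l\in\sigma_{l-1}$ at each swap, $|\sigma_l|=k+1$ throughout, the base case $|S|\le k+1$ of Step 2 uses $|V(X)|\ge k+1$ (guaranteed by $\dim X\ge k$), and the inductive step uses $|S\setminus\sigma|<|S|$.
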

\begin{proof}
 If $k = 0$, then we prove the result by the induction on the number of vertices of $X$. The base case ({\it i.e.,} $X$ is a vertex) is trivially true. Now observe that, 
if $X_{(0)}^o = \emptyset$ then $\lk(v, X)_{(0)}^o = \emptyset$. Moreover, for any vertex $v$, $\lk(v, X) = X - \{v\}$, which implies that $ X = (X - \{v\}) \ast \{v\}$.  Hence by induction on the number of vertices, we get that $\lk(v, X)$ is simplex and therefore $X$ is a simplex. 

Let $k > 0$.   Let $\sigma \in X$ be a $k$-dimensional simplex. Then $\lk(\sigma, X) = X[V(X) \setminus \sigma]$. Hence $X = \lk(\sigma, X) \ast \sigma$. Let $Y = \lk(\sigma, X)$. If $Y_{0}^o=\emptyset $, then $Y$ is a simplex and therefore $X$ is a simplex. If  $Y_{(0)}^o \neq \emptyset$, then $\lk(v, Y) \neq Y - \{v\}$ for some $v \in V(Y)$. Choose $w \in \sigma$ and let $\tau = (\sigma \setminus \{w\}) \cup \{v\}$. Then $\lk(\tau, X) \neq X[V(X) \setminus \tau]$, a contradiction. Hence $Y_{(0)}^o = \emptyset$. By induction $Y$ is a simplex and therefore 
$X = Y \ast \sigma$ implies that $X$ is a simplex.
 \end{proof}

\begin{definition}\label{definition:theta_k}
Let $X$ be simplicial complex and let $k$ be a non negative integer. Define $\M_0(X) = \M'_0(X)= \M(X)$ and for $k \geq 1$, define $\M_k$ inductively as follows;
\begin{equation*}
\M_k'(X)=\begin{cases}
\M_{k-1}(X) & \text{if } X_{(k)}^o=\emptyset,\\
\min\limits_{\sigma\in X_{(k)}^o}\{ \max \{ \M'_k(\del{(\sigma, X))},   \M'_k(\lk{(\sigma, X))}+k+1\} & \text{otherwise},
\end{cases} \end{equation*}

and $\M_k(X) = \min\{\M'_k(X), \M_{k-1}(X)\}$.
\end{definition}

\begin{remark}\label{remark:theta}
Note by definition $\M_k(X) \leq \M_{k-1}(X)$ for all $k\geq 1$.
\end{remark}

We now give an example where $\M_1 < \M_0$.

\begin{example} (Example V6F10-6 from \cite{MorTak}) \label{example1}
Let $\Delta$ be the simplicial complex on the vertex set $\{1, 2, 3, 4, 5,6\} $ with the set of  facets
$$\{\{1, 2, 3\}, \{1, 2, 4\}, \{1, 2, 5\}, \{1, 3, 4\},\{1, 3, 6\}, \{2, 4,5\}, \{2,5,6\}, \{3,4,6\},\{3, 5,6\}, \{4,5,6\}\}.$$

This example was also discussed in \cite{Dochvdecomps} as an example of a complex which is $1$-vertex decomposable but not $0$-vertex decomposable. Thus, from \Cref{thm:kvdecomposable}, we get that $\C(\Delta)=\M_1(\Delta)$. We now show that $\M_1(\Delta)\leq 2<3\leq \M_0(\Delta)$.

To compute $\M_1(\Delta)$, let us look at $\M_1(\lk(\{1,5\},\Delta))$ and $\M_1(\del(\{1,5\},\Delta))$. Observe that $\lk(\{1,5\},\Delta)$ is a point $\{2\}$ implying that $\M_1(\lk(\{1,5\},\Delta))=0$. Thus, $$\M_1(\Delta)\leq\max\{\M_1(\del(\{1,5\},\Delta)), 2\}.$$
Here, the set of  facets of $\del(\{1,5\},\Delta))$ is  $$\{\{1,2,3\}, \{1,2,4\}, \{1,3,4\},\{1,3,6\}, \{2,4,5\}, \{2,5,6\},\{3,4,6\},\{3,5,6\}, \{4,5,6\}\}.$$ It is easy to verify by doing a similar calculation on the deletion complexes using the sequence $\{\{1,6\}, \{2,3\},\{1,3\},\{1,2\},\{2,4\},\{2,5\},\{3,4\},\{3,5\},\{4,5\}\}$ of $1$-faces, link complex at every step is a simplex and the deletion complex at the end is $1$-dimensional. Observe that $\C(X)$ is always less than or equal to the dimension of $X$. Hence by using the sequence  $\{\{1,6\}, \{2,3\},\{1,3\},\{1,2\},\{2,4\},\{2,5\},\{3,4\},\{3,5\},\{4,5\}\}$ of $1$-faces in the $\del(\{1, 5\}, \Delta)$, we conclude that $\M_1(\Delta) \leq 2$. 

Observe that the link of every vertex contains an induced subcomplex isomorphic to a triangulation of a circle. Hence the collapsibility number of link of every vertex is $2$.  Thus for each  vertex $v\in \Delta$, $\M_0(\lk(v,\Delta))\ge \C(\lk(v,\Delta))\geq 2$. Hence by definition $\M_0(\Delta)\geq 3$. 
\end{example}


\begin{lemma}[{\cite[Proposition 1.2]{Tancerstrongd}}]\label{lem:tancer}

Let $X$ be a simplicial complex and let $v$ be a vertex of $X$. Then $\C(X) \leq \max\{\C(\del{X, v}), \C(\lk(v, X))+1\}$. 

\end{lemma}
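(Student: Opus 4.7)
The plan is to concatenate a lifted collapsing sequence of $\lk(v,X)$ with a collapsing sequence of $\del(v,X)$, yielding a collapsing sequence of $X$. Write $d=\C(\lk(v,X))$ and $c=\C(\del(v,X))$. The key observation is that the map $\sigma\mapsto \sigma\cup\{v\}$ gives a bijection between simplices of $\lk(v,X)$ and simplices of $X$ that contain $v$. This bijection preserves inclusion and, in particular, maximality: a simplex $\sigma\cup\{v\}$ is maximal in $X$ iff $\sigma$ is maximal in $\lk(v,X)$, because any simplex of $X$ strictly containing $\sigma\cup\{v\}$ still contains $v$ and hence comes from the link.

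Fix an optimal $d$-collapsing sequence
$$\lk(v,X)=L_1\xrightarrow{\gamma_1}L_2\xrightarrow{\gamma_2}\cdots\xrightarrow{\gamma_{k-1}}L_k=\emptyset,$$
so $|\gamma_i|\leq d$ for all $i$. Set $X_1=X$ and, for each $i$, define $X_{i+1}$ to be the result of the elementary collapse of $X_i$ using the free face $\gamma_i\cup\{v\}$, whose cardinality is at most $d+1$. To justify that this is a genuine elementary $(d+1)$-collapse, I would prove by induction on $i$ the invariant: the simplices of $X_i$ containing $v$ correspond bijectively via $\sigma\mapsto \sigma\cup\{v\}$ to the simplices of $L_i$. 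Given this, uniqueness of a maximal simplex $\sigma_i\cup\{v\}$ above $\gamma_i\cup\{v\}$ in $X_i$ follows from uniqueness of $\sigma_i$ above $\gamma_i$ in $L_i$; and the invariant is preserved at the next step because the simplices removed from $X_i$, namely the $\tau\cup\{v\}$ with $\gamma_i\subseteq \tau\subseteq \sigma_i$, match precisely those removed from $L_i$ under the elementary collapse.

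After these $k-1$ lifted elementary $(d+1)$-collapses, the complex $X_k$ contains no simplex with $v$, so $X_k=\del(v,X)$. To finish, I would apply an optimal $c$-collapsing sequence to $X_k$. The resulting full collapsing sequence for $X$ consists of $(d+1)$-collapses followed by $c$-collapses, exhibiting $X$ as $\max\{d+1,c\}$-collapsible, which is the desired bound
$$\C(X)\leq \max\{\C(\del(v,X)),\,\C(\lk(v,X))+1\}.$$
The only mildly delicate point is the inductive maintenance of the correspondence between the star of $v$ in $X_i$ and $L_i$; once that is set up, the rest is routine bookkeeping and there is no substantive obstacle.
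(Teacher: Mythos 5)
Your proof is correct and uses essentially the same argument as the paper: the lemma itself is quoted from Tancer, but the paper reproves its generalization to $k$-faces as \Cref{claim} inside the proof of \Cref{thetakprop}, by lifting each elementary collapse at a free face $\gamma_i$ of the link to an elementary collapse at $\gamma_i\cup\{v\}$ in $X_i$, arriving at $\del(v,X)$ and then collapsing that. Your inductive invariant identifying the star of $v$ in $X_i$ with $L_i$ just makes explicit a bookkeeping point the paper leaves implicit.
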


\begin{theorem}\label{thetakprop}
Let $X$ be a simplicial complex. Then for any $k \geq 0$, 
 $$ \C(X)\le \M_k(X).$$
\end{theorem}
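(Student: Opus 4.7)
The plan is an induction on $k$, with base case $k=0$ given by the cited Biyiko{\u{g}}lu--Civan bound $\C(X) \leq \M(X) = \M_0(X)$. For the inductive step, since $\M_k(X) = \min\{\M'_k(X), \M_{k-1}(X)\}$, the inductive hypothesis handles the second argument, so the task reduces to showing $\C(X) \leq \M'_k(X)$, which I would carry out by a secondary induction on the number of simplices of $X$. When $X_{(k)}^o = \emptyset$, the definition gives $\M'_k(X) = \M_{k-1}(X)$ and the outer inductive hypothesis closes the case. Otherwise, for each $\sigma \in X_{(k)}^o$ the subcomplexes $\del(\sigma, X)$ and $\lk(\sigma, X)$ are strictly smaller proper subcomplexes of $X$, so the secondary inductive hypothesis yields $\C(\del(\sigma, X)) \leq \M'_k(\del(\sigma, X))$ and $\C(\lk(\sigma, X)) \leq \M'_k(\lk(\sigma, X))$.

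The crucial new ingredient I would need is the following generalization of \Cref{lem:tancer} from vertices to arbitrary $k$-simplices: for any $\sigma \in X_{(k)}$,
$$\C(X) \leq \max\{\C(\del(\sigma, X)),\ \C(\lk(\sigma, X)) + k + 1\}.$$
Granting this, combining with the bounds above gives $\C(X) \leq \max\{\M'_k(\del(\sigma, X)), \M'_k(\lk(\sigma, X)) + k + 1\}$ for every $\sigma \in X_{(k)}^o$, and taking the minimum over such $\sigma$ recovers $\C(X) \leq \M'_k(X)$, as required.

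The heart of the argument is a constructive proof of this generalized Tancer inequality. I would take an optimal collapsing sequence $\lk(\sigma, X) = L_1 \xrightarrow{\gamma_1} L_2 \xrightarrow{\gamma_2} \cdots \xrightarrow{\gamma_{m-1}} L_m = \emptyset$ of the link and lift it to $X$ by setting $X_i := \del(\sigma, X) \cup \{\sigma \cup \rho : \rho \in L_i \cup \{\emptyset\}\}$, so that $X_1 = X$ and $X_m = \del(\sigma, X) \cup \{\sigma\}$. At step $i$ I would use $\gamma_i \cup \sigma$ as the free face in $X_i$: since $\gamma_i \in \lk(\sigma, X)$ forces $\gamma_i \cap \sigma = \emptyset$, this face has cardinality $|\gamma_i| + k + 1 \leq \C(\lk(\sigma, X)) + k + 1$, and any simplex of $X_i$ containing $\gamma_i \cup \sigma$ must contain $\sigma$ and hence be of the form $\sigma \cup \rho$ with $\rho \in L_i$ and $\gamma_i \subseteq \rho$. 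The free-face uniqueness in $L_i$ then identifies a unique maximal simplex $\sigma \cup \rho_i$ in $X_i$ containing $\gamma_i \cup \sigma$, and a direct bookkeeping check shows that this elementary collapse converts $X_i$ to $X_{i+1}$. A final elementary $(k+1)$-collapse at the free face $\sigma$ in $X_m$ produces $\del(\sigma, X)$, after which we invoke $\C(\del(\sigma, X))$-collapsibility to reach the empty complex.

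The main obstacle I anticipate is the careful verification of the lifted free-face property, i.e., ruling out other maximal simplices of $X_i$ containing $\gamma_i \cup \sigma$; the key observation is that such a maximal simplex must contain $\sigma$ and therefore cannot lie in the $\del(\sigma, X)$ piece, so the two pieces of $X_i$ do not interfere and uniqueness descends cleanly from $L_i$. Once this lift is justified, the rest of the argument is essentially a combination of the two nested inductions with the new Tancer-type bound.
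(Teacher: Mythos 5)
Your proposal is correct and follows essentially the same route as the paper: an outer induction on $k$, a secondary induction on the size of the complex, and the same key generalization of Tancer's lemma to $k$-faces, proved by lifting a collapsing sequence of $\lk(\sigma,X)$ to a sequence of elementary $(d+k+1)$-collapses of $X$ onto $\del(\sigma,X)$. Your write-up of the lift (the explicit intermediate complexes $X_i$ and the final collapse at $\sigma$) is in fact slightly more detailed than the paper's, but the argument is the same.
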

\begin{proof}
Proof is by induction on $k$. If $k=0$ and $X_{(0)}^o=\emptyset$, then $X$ is a simplex and $C(X)=0=\M_0(X)$.
If $k=0$ and $X_{(0)}^o\neq\emptyset$, then the result follows from \Cref{lem:tancer} and the definition of $\M_0(X)$.
Let $k \geq 1$ and assume that 
$\C(X) \leq \M_r(X)$ for $0 \leq r < k$.  We now prove that $\C(X) \le \M_k(X)$ by induction on the number of $k$-simplices of $X$. If $X$ has no $k$-simplex then clearly $X_{(k)}^o=\emptyset$ implying that $\M_k(X)=\M_{k-1}(X)$ and hence the result follows. 

By definition, $\M_k(X)=\min\{\M_{k-1}(X),\M_k'(X)\}$. If $\M_k(X)=\M_{k-1}(X)$ then the result follows from induction. Now assume that $\M_k(X)=\M_k'(X)$.

We first prove a generalization of \Cref{lem:tancer}. 
\begin{claim}\label{claim}
For any $\sigma \in X_{(k)}$ ({\it i.e.}, $\sigma$ is a $k$-face) $$\C(X) \le \max\{ \C(\del(\sigma, X), \C(\lk(\sigma, X)) +k+1 \}.$$
\end{claim}
\begin{proof}[Proof of \Cref{claim}]Let $\lk(\sigma, X)$ be $d$-collapsible. Then there exist a sequence of elementary $d$-collapses such that 

$$ \lk(\sigma, X)=X_0 \xrightarrow{\sigma_1}  X_1 \xrightarrow{\sigma_2} X_2 \ldots \xrightarrow{\sigma_r} X_r=\emptyset.$$
Since $\lk(\sigma, X) \xrightarrow{\sigma_1}  X_1$ is an elementary collapse, there exist a facet $\tau_1\in \lk(\sigma, X)$ such that $\sigma_1$ is a free face of $\tau_1$ in $\lk(\sigma, X)$. Therefore, $\sigma_1 \cup \sigma$ is a free face of $\tau_1\cup \sigma$ in $X$. Furthermore, since $|\sigma_1 \cup \sigma|\le d+k+1$, we get an elementary $(d+k+1)$-collapse in $X$. Hence, the sequence 
$$ X=Y_0 \xrightarrow{\sigma_1\cup \sigma }  Y_1 \xrightarrow{\sigma_2\cup \sigma } Y_2 \ldots \xrightarrow{\sigma_r\cup \sigma} Y_r=\del(\sigma, X)$$
gives a sequence of elementary $(d+k+1)$-collapses of $X$ onto $\del(\sigma, X)$.
This implies that the collapsibility number of $X$ is less than or equal to $\max\{ \C(\del(\sigma, X)), d+k+1\}$. 
\end{proof}

If $X_{(k)}^o= \emptyset$, then the result follows from the induction on $k$ (since $\M_k(X)=\M_{k-1}(X)$). For $X_{(k)}^o\neq  \emptyset$, let $\sigma\in X_{(k)}$ such that $\M_k'(X)=\max\{\M_k'(\del(\sigma, X)), \M_k'(\lk(\sigma,X))+k+1\}$.
From the previous claim, we have that 
\begin{equation*}
\begin{split}
    \C(X) & \le \max\{ \C(\del(\sigma, X), \C(\lk(\sigma, X)) +k+1 \}\\
& \le \max\{\M_k(\del(\sigma, X)), \M_k(\lk(\sigma,X))+k+1\}\\
& \leq \max\{\M_k'(\del(\sigma, X)), \M_k'(\lk(\sigma,X))+k+1\}\\
&=\M_k'(X)=\M_k(X).
\end{split}
\end{equation*}
Here, the second inequality follows from induction, and the third inequality follows from the fact that $\M_k(X)\leq \M_k'(X)$.
\end{proof}

\begin{remark} Note  that \Cref{thetakprop}, along with \Cref{example1}, implies that $\M_1$ is a better approximation to $\C(X)$ than $\M_{0}$. 
\end{remark}

In our next result, we show that the bound obtained in \Cref{thetakprop} is tight for a particular class of complexes known as $k$-vertex decomposable complexes. Given a simplicial complex  $X$ its  \emph{pure $n$-skeleton},  $X^{[n]}$ is  the subcomplex of $X$ spanned by all  $n$-faces of $X$. The complex $X$ is said to be \emph{ pure} $n$-dimensional complex if $X=X^{[n]}$.

A pure  $d$-dimensional simplicial complex $X$ is said to be {\it shellable},
if its maximal simplices can be ordered $\Gamma_1, \Gamma_2 \ldots, \Gamma_t$ in such a way
that the subcomplex $(\bigcup\limits_{i = 1}^{k-1} \Gamma_i) \cap \Gamma_k$ is pure
and $(d-1)$-dimensional for all $k = 2, \ldots, t$.  A pure simplicial complex $X$ is said to be \emph{Cohen Macaulay} if, for all simplices $\sigma \in X$, the complex $\lk(\sigma,X)$ is homologically ($\text{dim}(\lk(\sigma,X))-1$)-connected, {\it i.e.}, $\tilde{H}_i(\lk(\sigma,X))=0$ for all $i < \text{dim}(\lk(\sigma,X) $. As a consequence, we get that if $X$ is Cohen Macaulay, then $\lk(\sigma, X)$ is also Cohen Macaulay for any $\sigma \in X$. 

Alternatively, a pure simplicial complex $X$ is said to be \emph{Cohen Macaulay} if each induced subcomplex $A$ of $X$ is homologically ($\text{dim}(A)-1$)-connected. 
From this definition and standard facts on homology, it can be easily verified that if $X$ is Cohen Macaulay, then  any skeleton of $X$ is also Cohen Macaulay.

\begin{definition} \cite[Definition 5.1]{Dochvdecomps}
For  $k \geq 0$, a pure  $r$-dimensional simplicial complex  $X$ is said to be {\it $k$-vertex decomposable} if $X$ is a simplex or $X$ contains a face $\sigma$ such that 
\begin{enumerate}
    \item $\text{dim}(\sigma) \leq k $.
\item both $\del(\sigma, X)$ and $\lk(\sigma, X)$ are $k$-vertex decomposable, and 
\item $\del(\sigma, X)$ is pure and the dimension is same as that of $X$.  (Such a face $\sigma$ is called a {\it shedding} face of $X$). 
\end{enumerate}
\end{definition}

The $k$-vertex decomposability ($k \geq 1$) of a complex interpolates between the shellability  and $0$-vertex decomposability of the complex. More precisely,
$$
0\text{-vertex decomp.} \implies k\text{-vertex decomp.}  \implies \text{shellability} \implies \text{Cohen Macaulay}.
$$
The first two implications are discussed in  \cite[Section 5]{Dochvdecomps}. The last implication follows from \cite[Section 11]{bjorner}.

\begin{theorem}\label{thm:kvdecomposable}

   If $X$ is $k$-vertex decomposable for some $k\geq 0$, then $$\C(X)=\M_k(X)=\M'_k(X).$$

\end{theorem}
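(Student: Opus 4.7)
Since Theorem~\ref{thetakprop} together with the definition $\M_k(X) = \min\{\M_k'(X), \M_{k-1}(X)\}$ already yields $\C(X) \leq \M_k(X) \leq \M_k'(X)$, the task reduces to establishing $\M_k'(X) \leq \C(X)$ under the $k$-vertex decomposability hypothesis. I will induct on $|V(X)|$. The base case is immediate: if $X$ is a simplex, Lemma~\ref{lemma:conesimplex} shows $X_{(j)}^o = \emptyset$ for every $j$, so iterating the recursive definition of $\M_k'$ gives $\M_k'(X) = \M_{k-1}(X) = \cdots = \M_0(X) = 0 = \C(X)$.

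For the inductive step, let $\sigma$ be a shedding face of $X$ with $j = \dim \sigma \leq k$; by definition both $\del(\sigma, X)$ and $\lk(\sigma, X)$ are $k$-vertex decomposable, each with strictly fewer vertices, so the inductive hypothesis applies to them. The central ingredient I would prove is a \emph{shedding equality}:
\[
\C(X) \;=\; \max\bigl\{\C(\del(\sigma, X)),\ \C(\lk(\sigma, X)) + j + 1\bigr\},
\]
upgrading the Tancer-style upper bound (the Claim established inside the proof of Theorem~\ref{thetakprop}) to an equality by exploiting the purity condition $\dim \del(\sigma, X) = \dim X$ built into shedding. Granting this, in the case $j = k$ one first checks that $\sigma \in X_{(k)}^o$: otherwise $X = \sigma * \lk(\sigma, X)$, which would force $\dim \del(\sigma, X) = \dim X - 1$, contradicting purity. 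The recurrence for $\M_k'$ together with the inductive hypothesis then gives
\[
\M_k'(X) \leq \max\bigl\{\M_k'(\del(\sigma, X)),\ \M_k'(\lk(\sigma, X)) + k + 1\bigr\} = \max\bigl\{\C(\del(\sigma, X)),\ \C(\lk(\sigma, X)) + k + 1\bigr\} = \C(X).
\]
When $j < k$, I would layer a secondary induction on $k$: since the shedding decomposition of $X$ then has its maximum shedding-face dimension bounded by $k-1$, one can argue that $X$ is $(k{-}1)$-vertex decomposable, so the outer induction hypothesis yields $\M_{k-1}'(X) = \C(X)$; coupled with the definitional case $X_{(k)}^o = \emptyset \Rightarrow \M_k'(X) = \M_{k-1}(X)$, or with an extension-of-$\sigma$ argument when $X_{(k)}^o \neq \emptyset$, this delivers $\M_k'(X) \leq \C(X)$.

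The main obstacle I anticipate is the shedding equality, particularly the lower bound $\C(X) \geq \C(\lk(\sigma, X)) + j + 1$. The matching upper bound is Tancer's inequality; the reverse direction requires showing that any $\C(X)$-collapsing sequence of $X$ projects to a $(\C(X) - j - 1)$-collapsing sequence of $\lk(\sigma, X)$, and this is exactly where the purity of $\del(\sigma, X)$ must be invoked essentially, to ensure that free faces of $X$ translate coherently into free faces on the link without being absorbed by faces containing $\sigma$. A secondary subtlety is the case $j < k$, where the $\M_k'$ recurrence insists on $k$-dimensional faces while the only natural shedding face is of lower dimension; reconciling this requires the outer induction on $k$ paired with a careful bookkeeping of when $X_{(k)}^o$ is empty versus non-empty.
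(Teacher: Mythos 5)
Your reduction to proving $\M_k'(X)\leq \C(X)$ is correct and your base case is fine, but there is a genuine gap at exactly the point you flag as the ``main obstacle'': the lower bound $\C(X)\geq \C(\lk(\sigma,X))+j+1$ for a shedding face $\sigma$. You offer no proof of this, and the strategy you sketch --- projecting a $\C(X)$-collapsing sequence of $X$ onto a collapsing sequence of $\lk(\sigma,X)$ --- is not known to work: an elementary collapse of $X$ need not restrict to an elementary collapse of the link (the free pair may involve faces that interact with $\sigma$ incoherently), and purity of $\del(\sigma,X)$ gives no evident control over this. Lower bounds on collapsibility are essentially always obtained homologically, and that is precisely what the paper does: it proves the shedding inequality for the \emph{Leray number}, $\cL(X)\geq\max\{\cL(\del(\sigma,X)),\cL(\lk(\sigma,X))+k+1\}$ (\Cref{sheddingface}), via a Mayer--Vietoris argument (\Cref{thm-MayerVietoris}) combined with the Cohen--Macaulayness of $\del(\sigma,X)$ (which follows from $k$-vertex decomposability implying shellability, hence Cohen--Macaulayness). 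It then closes the circle $\cL(X)\leq\C(X)\leq\M_k(X)\leq\M_k'(X)\leq\cL(X)$ using Wegner's inequality $\cL\leq\C$, so the collapsibility number itself is never bounded from below directly. Without either importing this homological machinery or supplying a genuinely new combinatorial proof of your shedding equality, your argument does not go through.

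A second, smaller problem is your treatment of the case $\dim\sigma=j<k$. From the fact that the top-level shedding face has dimension at most $k-1$ you cannot conclude that $X$ is $(k-1)$-vertex decomposable: the recursive decompositions of $\del(\sigma,X)$ and $\lk(\sigma,X)$ may still use shedding faces of dimension $k$. The paper sidesteps this by working with a shedding face of dimension exactly $k$ (using \Cref{lem:existance_of_nonconeface} to guarantee $X_{(k)}^o\neq\emptyset$) and inducting on the number of $k$-faces rather than on the number of vertices.
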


To prove \Cref{thm:kvdecomposable}, we need a few results which we prove now.
Recall that $X_{(k)}^o=\{ \sigma \in X_{(k)} : \lk{(\sigma, X)} \neq X[(V(X)\setminus V(\sigma)] \}$. 

\begin{lemma}\label{lem:existance_of_nonconeface}
Let $X$ be of a $k$-vertex decomposable simplicial complex of dimension at least $k$. Then $X_{(k)}^o \neq \emptyset$.
\end{lemma}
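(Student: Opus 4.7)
The plan is to combine the shedding-face structure of a $k$-vertex decomposition with \Cref{lemma:conesimplex}. Observe first a subtlety: if $X$ is itself a simplex of dimension at least $k$, then every $k$-face $\sigma$ satisfies $\lk(\sigma,X)=X[V(X)\setminus V(\sigma)]$, so $X_{(k)}^o=\emptyset$. Hence the lemma must be understood for non-simplex $X$; this is harmless, since in the subsequent proof of \Cref{thm:kvdecomposable} the simplex case is trivial ($\C(X)=0=\M_k(X)$).

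Assume then that $X$ is not a simplex. The $k$-vertex decomposition hands us a shedding face $\sigma_0$ with $\dim(\sigma_0)\leq k$, $\del(\sigma_0,X)$ pure of dimension $\dim(X)$, and $\lk(\sigma_0,X)$ still $k$-vertex decomposable. The main case is $\dim(\sigma_0)=k$: I claim $\sigma_0\in X_{(k)}^o$. Indeed, if instead $\lk(\sigma_0,X)=X[V(X)\setminus V(\sigma_0)]$, then $X=\sigma_0\ast\lk(\sigma_0,X)$ and every facet of $X$ contains $\sigma_0$; hence $\del(\sigma_0,X)$ contains no facet of $X$, forcing $\dim(\del(\sigma_0,X))<\dim(X)$ and contradicting the required pureness in dimension $\dim(X)$.

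If $\dim(\sigma_0)<k$, the shedding face is too low-dimensional to serve directly as a witness. Here I would either induct on $|V(X)|$ via $\lk(\sigma_0,X)$ (which is $k$-vertex decomposable on strictly fewer vertices, while $\del(\sigma_0,X)$ inherits the $k$-vertex decomposable, non-simplex, pure structure of dimension at least $k$), or more efficiently invoke the contrapositive of \Cref{lemma:conesimplex} directly: since $\dim(X)\geq k$ and $X$ is not a simplex, one immediately gets $X_{(k)}^o\neq\emptyset$. The main obstacle is precisely this case, where the $k$-vertex decomposition does not hand us a $k$-face on a platter; but \Cref{lemma:conesimplex} provides a clean fallback, and the entire conceptual content of the lemma lives in the cone-versus-pureness contradiction of Case~1.
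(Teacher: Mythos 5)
Your argument is essentially the paper's for the main case: the paper also exhibits a shedding face $\tau$ with $\dim(\tau)\le k$ and derives a contradiction between the cone condition and the purity of $\del(\tau,X)$ in dimension $\dim(X)$. Your observation that the lemma as literally stated fails when $X$ is a simplex (so that one must implicitly assume $X$ is not a simplex, which is how it is used in the proof of \Cref{thm:kvdecomposable}) is a correct and worthwhile reading. The one place you diverge is the case $\dim(\sigma_0)<k$: the paper's fix is a one-liner you did not find --- since $X$ is pure of dimension at least $k$, the shedding face $\tau$ extends to some $k$-face $\sigma\supseteq\tau$, and the very same cone-versus-purity contradiction (now run against $\del(\tau,X)$ rather than $\del(\sigma,X)$: if $\gamma$ is a facet of $X[V(X)\setminus\sigma]$ then $\gamma\cup\sigma$ is a facet of $X$, while every maximal face of $\del(\tau,X)$ containing $(\gamma\cup\sigma)\setminus\tau$ has dimension strictly less than $\dim(X)$) shows $\sigma\in X_{(k)}^o$. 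Your first proposed fallback, inducting through $\lk(\sigma_0,X)$ and $\del(\sigma_0,X)$, is too vague to accept as written: a non-cone $k$-face of $\del(\sigma_0,X)$ need not be a non-cone $k$-face of $X$, and $\lk(\sigma_0,X)$ may have dimension below $k$. However, your second fallback is valid: the contrapositive of \Cref{lemma:conesimplex} already yields $X_{(k)}^o\neq\emptyset$ for any non-simplex complex of dimension at least $k$, with no decomposability hypothesis at all. This is a genuinely shorter route that subsumes the lemma entirely (and exposes that the shedding-face argument is, strictly speaking, redundant given \Cref{lemma:conesimplex}); the price is that it obscures the specific fact the paper wants downstream, namely that a shedding $k$-face itself can be taken as the witness in $X_{(k)}^o$.
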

\begin{proof}
 Let $X$ be $r$-dimensional, where $r\ge k$. Since $X$ is $k$-vertex decomposable, there exists a shedding face $\tau \in X$ such that $\dim(\tau)\leq k$ and dim$(\del(\tau, X))=r$. Since $X$ is pure and $r \geq k$, there exists a $k$-face $\sigma \in X$ such that $\tau \subseteq \sigma$.  We now prove that $\sigma \in X_{(k)}^o$. On contrary, assume that $\sigma \notin X_{(k)}^o$, {\it i.e.},  $\lk{(\sigma, X)} = X[(V(X)\setminus V(\sigma)]$. Let $\gamma$ be a facet of $X[(V(X)\setminus V(\sigma)]$. This implies that $ \gamma\sqcup \sigma$ is a facet of $X$. Hence, $(\gamma\cup\sigma)\setminus \tau$ is a facet of $\del(\tau,X)$. Now observe that dim$((\gamma\cup\sigma)\setminus \tau)< \dim(\gamma\cup \sigma)= r$. This contradicts the fact that $\del(\tau,X)$ is pure and of dimension $r$. Hence $\sigma \in X_{(k)}^o$.
\end{proof}

The following proposition is a generalization of \cite[Theorem 4.2]{LerayResult}.

\begin{lemma} \label{thm-MayerVietoris}
Let $Y$ be a simplicial complex and suppose that $\tilde{H}_{n-k}(\lk(\sigma,Y)) \neq 0$ for a $k$-face $\sigma \in Y$. If $\lk(\sigma,Y)^{[n-k]}$ is contained in a subcomplex $Y_0$ of $\del(\sigma,Y)$ with $\tilde{H}_{n-k}(Y_0) =0$, then $\tilde{H}_{n+1}(Y) \neq 0$. 
\end{lemma}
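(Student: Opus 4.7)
The plan is to run the reduced Mayer--Vietoris sequence on the decomposition
\[
Y = \overline{\mathrm{st}}(\sigma, Y) \cup \del(\sigma, Y),
\]
whose intersection is $\partial\sigma * \lk(\sigma, Y)$. Since $\overline{\mathrm{st}}(\sigma, Y) = \sigma * \lk(\sigma, Y)$ is a cone and hence contractible, the portion of the sequence in degrees $n+1$ and $n$ collapses to the exact fragment
\[
\tilde{H}_{n+1}(Y) \longrightarrow \tilde{H}_n\bigl(\partial\sigma * \lk(\sigma,Y)\bigr) \xrightarrow{j_*} \tilde{H}_n(\del(\sigma,Y)),
\]
where $j$ is the inclusion. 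Because $\partial\sigma \cong S^{k-1}$, the iterated suspension isomorphism identifies $\tilde{H}_n(\partial\sigma * \lk(\sigma,Y)) \cong \tilde{H}_{n-k}(\lk(\sigma,Y))$, which is nonzero by hypothesis. It therefore suffices to exhibit a nonzero class $\alpha \in \tilde{H}_{n-k}(\lk(\sigma,Y))$ lying in $\ker j_*$; exactness will then furnish the required nonzero class in $\tilde{H}_{n+1}(Y)$.

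To locate such an $\alpha$, I would pick any cycle $z \in C_{n-k}(\lk(\sigma,Y))$ representing a nonzero homology class. Because $z$ is supported on $(n-k)$-simplices of $\lk(\sigma,Y)$, it automatically lies in $C_{n-k}(\lk(\sigma,Y)^{[n-k]})$, and by hypothesis this embeds into $C_{n-k}(Y_0)$. Since $\tilde{H}_{n-k}(Y_0) = 0$, there exists $w \in C_{n-k+1}(Y_0)$ with $\partial w = z$. Under the suspension isomorphism, $\alpha$ is represented on the chain level by $\partial\sigma * z$, and the standard boundary formula for joins yields
\[
\partial(\partial\sigma * w) \;=\; \partial(\partial\sigma) * w + (-1)^{k}\,\partial\sigma * \partial w \;=\; (-1)^{k}\,\partial\sigma * z.
\]
Provided that $\partial\sigma * w$ genuinely defines a chain in $\del(\sigma,Y)$, this shows $j_*(\alpha)=0$.

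The delicate step is the final verification: a typical simplex of $\partial\sigma * w$ has the form $\tau \cup \mu$ with $\tau \subsetneq \sigma$ and $\mu$ appearing in $w$, and one needs both $\tau \cup \mu \in Y$ and $\sigma \not\subseteq \tau \cup \mu$. Since $\mu \in Y_0 \subseteq \del(\sigma,Y)$ only ensures $\sigma \not\subseteq \mu$, nothing a priori rules out $\mu$ sharing vertices with $\sigma$, which could cause $\sigma \subseteq \tau \cup \mu$ or push the union outside $Y$. I expect this to be the main obstacle, and I would handle it either by refining $w$ within its homology class in $Y_0$ to avoid vertices of $\sigma$ (exploiting $\tilde{H}_{n-k}(Y_0)=0$ together with the fact that $z$ itself is supported away from $\sigma$), or by replacing $\partial\sigma * w$ with a corrected $(n+1)$-chain in $\del(\sigma,Y)$ obtained after discarding the offending simplices, whose contributions to the boundary cancel in pairs by a careful sign analysis. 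With this chain-level construction in place, $\alpha \in \ker j_*$ and the Mayer--Vietoris sequence produces the required nonzero element of $\tilde{H}_{n+1}(Y)$.
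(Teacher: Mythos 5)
Your reduction to showing that the inclusion-induced map $j_*\colon \tilde{H}_n(\lk(\sigma,Y)\ast\partial\sigma)\to\tilde{H}_n(\del(\sigma,Y))$ kills the class $[\partial\sigma\ast z]$ is exactly the paper's strategy, and the ``delicate step'' you flag at the end is not a technicality to be engineered around: it is a genuine gap, and for $k\geq 1$ it cannot be closed, because the assertion $j_*(\alpha)=0$ (and with it the lemma as stated) is false. Take $Y$ on vertices $\{a,b,x,y\}$ with facets $\{a,b,x\}$, $\{a,b,y\}$, $\{x,y\}$, and let $\sigma=\{a,b\}$, so $k=1$ and $n=1$. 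Then $\lk(\sigma,Y)$ is the two-point complex on $\{x,y\}$, so $\tilde{H}_{0}(\lk(\sigma,Y))\neq 0$; its pure $0$-skeleton is contained in the edge $Y_0=\{\emptyset,\{x\},\{y\},\{x,y\}\}$, which is a subcomplex of $\del(\sigma,Y)$ with $\tilde{H}_0(Y_0)=0$. Yet $\tilde{H}_{2}(Y)=0$, since $Y$ has only the two $2$-faces $\{a,b,x\}$ and $\{a,b,y\}$, which support no $2$-cycle. What goes wrong is precisely your worry: $\del(\sigma,Y)$ is the graph with edges $ax,ay,bx,by,xy$, so the $4$-cycle representing $[\partial\sigma\ast z]$ is a nonzero element of $\tilde{H}_1(\del(\sigma,Y))$ and $j_*$ is injective. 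Neither of your proposed repairs can work here: the only chain of $Y_0$ bounding $z=x-y$ is $\pm[xy]$, so there is nothing to refine, and $C_2(\del(\sigma,Y))=0$, so no ``corrected'' $(n+1)$-chain exists.

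The underlying point is that bounding $z$ inside $Y_0$ only yields a bounding chain $\partial\sigma\ast w$ for $\partial\sigma\ast z$ when $Y_0\ast\partial\sigma$ is itself a subcomplex of $\del(\sigma,Y)$, which the hypotheses do not guarantee; this is automatic only for $k=0$, where $\partial\sigma\ast w=w$ already lies in $Y_0$ and your argument is complete (that is the case covered by the cited Theorem~4.2). Be aware that the paper's own proof of \Cref{thm-MayerVietoris} commits exactly the error you isolated: it declares the map $i$ trivial ``since the map $i$ is induced by an inclusion of $\lk(\sigma,Y)^{[n-k]}$ in $Y_0$'', but $i$ is induced by the inclusion of the whole join $\lk(\sigma,Y)\ast\partial\sigma$ and does not factor through $Y_0$. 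So your proposal faithfully reproduces the paper's argument together with its gap; any repair (as needed downstream in \Cref{SCM} and \Cref{sheddingface}) must import the extra structure actually present there, e.g.\ that $\sigma$ is a shedding face with $\del(\sigma,Y)$ Cohen--Macaulay of full dimension, which excludes the example above because there $\del(\sigma,Y)$ drops dimension.
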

\begin{proof}

Given a $k$-face $\sigma$, let st$(\sigma, Y)=\{\tau \in Y : \sigma \subseteq \tau\}$. Then, $Y=\del(\sigma, Y) \cup \text{st}(\sigma, Y) $ and $\del(\sigma, Y) \cap \text{st}(\sigma, Y) =\lk( \sigma, Y) \ast \partial(\sigma)$, where $\partial(\sigma)= \{\tau : \tau \subsetneq \sigma\}$.

Using Mayer-Vietoris we get that the sequence 
$$ \tilde{H}_{n+1}(Y) \to \tilde{H}_{n}(\lk(\sigma,Y)\ast \partial(\sigma)) \xrightarrow{i} \tilde{H}_{n}(\del(\sigma,X))  $$
is exact. Note that, 
$$\tilde{H}_n(\lk(\sigma,Y)\ast \partial(\sigma))\cong \tilde{H}_n(\Sigma^{k}(\lk(\sigma,Y))) \cong \tilde{H}_{n-k}(\lk(\sigma,Y)).$$ 

Since the map $i$ is induced by an inclusion of $\lk(\sigma,Y)^{[n-k]}$ in $ Y_0\subseteq\del(\sigma,Y)$, the map $i$ is trivial. Thus, by the exactness of the above diagram, we get the required result. 
\end{proof}


\begin{lemma} \label{SCM}
Let $k\geq 1$ be a positive integer, and $Y$ be a simplicial complex. If $\tau$ is a shedding $k$-face for $Y$, $\del( \tau,Y)$ is  Cohen Macaulay and $\tilde{H}_{n-k}(\lk(\tau,Y)) \neq 0$, then $\tilde{H}_{n+1}(Y) \neq 0$. 
\end{lemma}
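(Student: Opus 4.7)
The plan is to apply Lemma~\ref{thm-MayerVietoris} to $Y$ with the given face $\tau$ and with $Y_0 := \del(\tau, Y)$ itself as the containing subcomplex. So I need to check that $\lk(\tau,Y)^{[n-k]} \subseteq Y_0$ and that $\tilde{H}_{n-k}(Y_0) = 0$; the hypothesis $\tilde{H}_{n-k}(\lk(\tau,Y)) \neq 0$ is given.

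First I would extract the relevant dimension information. Since $\tilde{H}_{n-k}(\lk(\tau,Y)) \neq 0$, there exists an $(n-k)$-dimensional face $\sigma \in \lk(\tau,Y)$. Then $\sigma \sqcup \tau \in Y$ has dimension $(n-k) + k + 1 = n+1$, so $\dim Y \geq n+1$. Since $\tau$ is a shedding face, $\del(\tau,Y)$ is pure of dimension equal to $\dim Y$, so $\dim Y_0 \geq n+1$. For the inclusion, any face of $\lk(\tau,Y)$ is disjoint from $\tau$, and since $\tau \neq \emptyset$ (as $k \geq 1$) such a face cannot contain $\tau$; hence $\lk(\tau,Y) \subseteq \del(\tau,Y) = Y_0$, and in particular $\lk(\tau,Y)^{[n-k]} \subseteq Y_0$.

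For the homology vanishing, I would invoke the Cohen--Macaulay hypothesis in its standard form: a Cohen--Macaulay complex $Z$ satisfies $\tilde{H}_i(Z) = 0$ for every $i < \dim Z$ (take the link of the empty face in the definition recalled before the statement). Applied to $Y_0 = \del(\tau, Y)$, which is Cohen--Macaulay of dimension at least $n+1$, and with $n-k < n+1$, this gives $\tilde{H}_{n-k}(Y_0) = 0$. All three hypotheses of Lemma~\ref{thm-MayerVietoris} are then in place, and that lemma yields $\tilde{H}_{n+1}(Y) \neq 0$.

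The main conceptual point is recognizing that Lemma~\ref{thm-MayerVietoris} already carries the burden of the argument, and that no carefully chosen skeleton of $\del(\tau,Y)$ is required: the purity forced by $\tau$ being a shedding face pushes $\dim \del(\tau,Y)$ above $n-k$, which places $n-k$ safely inside the Cohen--Macaulay vanishing range, so that $\del(\tau,Y)$ itself can play the role of $Y_0$. The only subtlety I would flag is the dimension bookkeeping $\dim Y \geq n+1$, which is what makes the whole setup consistent with the conclusion $\tilde{H}_{n+1}(Y) \neq 0$.
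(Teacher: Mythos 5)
Your proof is correct and rests on the same pivot as the paper's: both reduce the statement to \Cref{thm-MayerVietoris}. The only real difference is the choice of $Y_0$. The paper takes $Y_0=\del(\tau,Y)^{[n+1]}$, the pure $(n+1)$-skeleton of the deletion, and uses the fact that skeleta of Cohen--Macaulay complexes are Cohen--Macaulay to conclude that this skeleton is homologically $n$-connected, whence $\tilde{H}_{n-k}(Y_0)=0$. You take $Y_0=\del(\tau,Y)$ itself and instead establish the dimension bound $\dim \del(\tau,Y)=\dim Y\geq n+1$ (by joining an $(n-k)$-face of $\lk(\tau,Y)$ with $\tau$), so that $n-k<\dim Y_0$ and the Cohen--Macaulay vanishing $\tilde{H}_i=0$ for $i<\dim$ applies directly. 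Your route is marginally cleaner: it dispenses with the auxiliary skeleton, and it makes explicit the inequality $\dim Y\geq n+1$, which the paper's containment $\del(\tau,Y)^{[n-k]}\subseteq \del(\tau,Y)^{[n+1]}$ also tacitly requires (that containment needs every $(n-k)$-face to lie in an $(n+1)$-face, i.e., purity in dimension at least $n+1$). The two arguments deliver the same conclusion; yours simply spends the shedding-face purity on a dimension count rather than on passing to a skeleton.
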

\begin{proof}
If $\tau$ is a shedding $k$-face for $Y$, then $\del(\tau,Y)$ is a pure complex of $\text{dim}(Y)$. Moreover, $\lk(\tau,Y)^{[n-k]}$ will be contained in the subcomplex $\del(\tau,Y)^{[n-k]} \subseteq \del(\tau,Y)^{[n+1]}$. Further,  $\del(\tau,Y)$ is a Cohen Macaulay complex  implies that $\del(\tau,Y)^{[n]}$ is Cohen Macaulay for all $n\ge 1$. In particular, choosing $\emptyset=\sigma \in  \del( \tau,Y)$ we get that $\del( \tau, Y)^{[n+1]}=\lk(\sigma,\del(\tau,Y)^{[n+1]})$ is homologically $n$-connected. 
Therefore,   $\tilde{H}_{n+1}(Y)$ $ \neq 0$ by \Cref{thm-MayerVietoris}.
\end{proof}

Our next result establishes the commutativity of link and deletion of disjoint faces in a complex.

\begin{lemma}\label{lem:linkdeletionface}
Let $\sigma, \tau \in X$ such that $\sigma \cap \tau =\emptyset$. Then, $\lk(\tau,(\del(\sigma,X))=\del(\sigma,\lk(\tau,X))$.
\end{lemma}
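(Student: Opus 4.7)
The plan is to verify the equality directly from the definitions by a double inclusion argument (or equivalently by showing both sides have the same elements). Both sides are subcomplexes of $X$, so I will fix an arbitrary simplex $\gamma$ and show $\gamma \in \lk(\tau,\del(\sigma,X))$ if and only if $\gamma \in \del(\sigma,\lk(\tau,X))$.

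First I would unfold each side using the definitions of $\lk$ and $\del$. The left-hand side becomes
\[
\lk(\tau,\del(\sigma,X)) = \{\gamma \in X : \sigma \not\subseteq \gamma,\ \gamma \cap \tau = \emptyset,\ \gamma \cup \tau \in X,\ \sigma \not\subseteq \gamma \cup \tau\},
\]
while the right-hand side becomes
\[
\del(\sigma,\lk(\tau,X)) = \{\gamma \in X : \gamma \cap \tau = \emptyset,\ \gamma \cup \tau \in X,\ \sigma \not\subseteq \gamma\}.
\]

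The key observation is that under the hypothesis $\sigma \cap \tau = \emptyset$ together with $\gamma \cap \tau = \emptyset$, the conditions $\sigma \not\subseteq \gamma$ and $\sigma \not\subseteq \gamma \cup \tau$ are equivalent: if $\sigma \subseteq \gamma \cup \tau$, then every $v \in \sigma$ lies in $\gamma \cup \tau$, but $v \notin \tau$ (since $\sigma \cap \tau = \emptyset$), forcing $v \in \gamma$, so $\sigma \subseteq \gamma$; the converse direction is immediate. Hence the two defining conditions describe the same set of simplices, and the equality follows.

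There is no real obstacle here; the lemma is a bookkeeping fact about simplicial operations, and the only substantive input is the disjointness hypothesis $\sigma \cap \tau = \emptyset$, which is precisely what allows the containment conditions involving $\sigma$ to be transferred between $\gamma$ and $\gamma \cup \tau$. I would write the proof as a single display of equalities, concluding with a brief sentence invoking $\sigma \cap \tau = \emptyset$ at the step that identifies the two condition sets.
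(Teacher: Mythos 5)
Your proof is correct and follows essentially the same route as the paper's: both arguments reduce to the observation that, because $\sigma \cap \tau = \emptyset$, the conditions $\sigma \not\subseteq \gamma$ and $\sigma \not\subseteq \gamma \cup \tau$ are equivalent, so the two sides impose identical membership conditions. The paper phrases this as two separate inclusions rather than a single unfolding of definitions, but the content is the same.
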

\begin{proof}
Let $\gamma\in \lk((\tau,\del(\sigma,X))$. Thus $\gamma \cup \tau \in \del(\sigma,X)$ implying that $\sigma \nsubseteq (\gamma \cup \tau)$. 
This gives us that $\sigma \nsubseteq \gamma$. Moreover, we know that $\gamma \in \lk(\tau,\del(\sigma,X)) \subseteq \lk(\tau,X)$. Therefore, the last two statements imply that $\gamma \in \del(\sigma, \lk(\tau, x))$. 

Now let $\eta \in \del(\sigma,\lk(\tau,X))$. So, $\sigma \nsubseteq \eta$. Furthermore, $\sigma \cap \tau = \emptyset$ implies that $\sigma \nsubseteq \eta \cup \tau$. Hence $\eta\cup \tau \in \del(\sigma,X)$ which gives us that $\eta \in \lk(\tau,\del( \sigma,X))$.
\end{proof}

\begin{definition}\label{Leray}
A simplicial complex $X$ is called  {\em $k$-Leray} if $\tilde{H}_i(\mathsf{L}) = 0 $ for all $i\geq k$ and  for every induced subcomplex $\mathsf{L}\subseteq X$. The Leray number $\cL(X)$ of $X$ is the least integer $k$ for which $X$ is $k$-Leray.
\end{definition}

\begin{proposition}\label{sheddingface} If $\sigma$ is a shedding $k$-face for a simplicial complex $X$ such that $\del(\sigma,X)$ is Cohen-Macaulay, then $\cL(X)\geq \max\{\cL(\del(\sigma,X)), \cL(\lk(\sigma,X))+k+1\}$.    
\end{proposition}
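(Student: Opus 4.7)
I would split the proposition into the two inequalities
(a) $\cL(X) \geq \cL(\del(\sigma,X))$ and
(b) $\cL(X) \geq \cL(\lk(\sigma,X)) + k + 1$,
attacking (b) first because the proof of (a) will use (b).

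For (b), set $m = \cL(\lk(\sigma,X))$. By minimality of the Leray number there exists $B \subseteq V(X)\setminus V(\sigma)$ with $\tilde{H}_{m-1}(\lk(\sigma,X)[B]) \neq 0$. I would then set $Y := X[B \cup V(\sigma)]$ and observe that $\lk(\sigma, Y) = \lk(\sigma,X)[B]$, so the hypothesis of \Cref{thm-MayerVietoris} with $n = m+k-1$ is in place on the link side; producing $\tilde{H}_{m+k}(Y) \neq 0$ then yields $\cL(X) \geq \cL(Y) \geq m+k+1$ because $Y$ is induced in $X$. The content is constructing a subcomplex $Y_0 \subseteq \del(\sigma,Y)$ that contains $\lk(\sigma,Y)^{[m-1]}$ and has $\tilde{H}_{m-1}(Y_0) = 0$. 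My candidate is $Y_0 := \del(\sigma,X)^{[m+k]}[B \cup V(\sigma)]$, the induced subcomplex of the pure $(m+k)$-skeleton of $\del(\sigma,X)$; since $\sigma$ is shedding for $X$, $\del(\sigma,X)$ is pure of dimension $\dim(X) \geq m+k$ (the inequality follows from $\dim(\lk(\sigma,X)[B]) \geq m-1$), and since $\del(\sigma,X)$ is Cohen--Macaulay so is its pure $(m+k)$-skeleton. The alternative characterization of CM given in the excerpt then forces $\tilde{H}_{m-1}(Y_0) = 0$ as soon as $\dim(Y_0) \geq m$. To verify containment and dimension, I would use purity of $\del(\sigma,X)$ to lift faces; concretely, given any $(m-1)$-face $\eta$ of $\lk(\sigma,Y)$ and any $v \in V(\sigma)$, $\eta \cup (V(\sigma)\setminus\{v\})$ is an $(m+k-1)$-face of $\del(\sigma,Y)$ whose lift to an $(m+k)$-face of $\del(\sigma,X)$ (by purity) places every subface of $\eta$ in $Y_0$ and also gives $\dim(Y_0) \geq m+k-1 \geq m$ when $k \geq 1$.

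For (a), given an induced subcomplex $\del(\sigma,X)[A]$ with $\tilde{H}_r \neq 0$ for $r := \cL(\del(\sigma,X)) - 1$, I would split on whether $V(\sigma) \subseteq A$. If $V(\sigma) \not\subseteq A$, then $\del(\sigma,X)[A] = X[A]$ as subcomplexes (every face of $X[A]$ avoids some vertex of $\sigma$ and hence does not contain $\sigma$), so $\cL(X) \geq r+1 = \cL(\del(\sigma,X))$ immediately. Otherwise $\del(\sigma,X)[A] = \del(\sigma, X[A])$, and I would run Mayer--Vietoris on $X[A] = \del(\sigma,X[A]) \cup \mathrm{st}(\sigma,X[A])$, using that $\mathrm{st}$ is contractible and that $\tilde{H}_\ast(\lk(\sigma,X[A])\ast \partial\sigma) \cong \tilde{H}_{\ast-k}(\lk(\sigma,X[A]))$ via the $(k-1)$-fold suspension. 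The resulting exact sequence near degree $r$ forces one of two outcomes: either $\tilde{H}_r(X[A]) \neq 0$ (and $X[A]$ is induced in $X$, so $\cL(X) \geq r+1$), or by exactness $\tilde{H}_{r-k}(\lk(\sigma,X)[A\setminus V(\sigma)]) \neq 0$, giving $\cL(\lk(\sigma,X)) \geq r-k+1$; invoking (b) then yields $\cL(X) \geq (r-k+1) + k + 1 = r+2$, which is strictly better than needed.

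\textbf{Main obstacle.} The crux is the dimension bookkeeping in (b). The pure $(m+k)$-skeleton has to be large enough to contain $\lk(\sigma,Y)^{[m-1]}$ and still inherit the Cohen--Macaulay property on all induced subcomplexes in the sense of the alternative definition, and the shedding hypothesis on $\sigma$ in $X$ is needed precisely to allow lifting $(m-1)$-faces of $\lk(\sigma,Y)$ into $(m+k)$-faces of $\del(\sigma,X)$ despite $\del(\sigma,Y) = \del(\sigma,X)[B\cup V(\sigma)]$ itself failing to be either pure or Cohen--Macaulay. A related subtlety arises when $k = 0$ (shedding vertex): the estimate $\dim(Y_0) \geq m+k-1$ degenerates to $m-1$ and no longer yields $\tilde{H}_{m-1}(Y_0) = 0$, so that case likely requires enlarging $B$ by the vertex set of a top-dimensional face of $\del(\sigma,X)$ through a chosen $(m-1)$-face of $\lk(\sigma,X)[B]$, and then using a minimality argument on $B$ to ensure the nontrivial class in $\tilde{H}_{m-1}$ of the link survives the enlargement.
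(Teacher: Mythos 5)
Your reduction to the two inequalities and your Mayer--Vietoris treatment of part (a) are sensible, but part (b) --- on which (a) also depends --- has a genuine gap at the step $\tilde{H}_{m-1}(Y_0)=0$. You take $Y_0=\del(\sigma,X)^{[m+k]}[B\cup V(\sigma)]$, an \emph{induced subcomplex} of the Cohen--Macaulay complex $\del(\sigma,X)^{[m+k]}$, and invoke the ``alternative definition'' of Cohen--Macaulayness to conclude that $Y_0$ is homologically $(\dim(Y_0)-1)$-connected. That property of induced subcomplexes is strictly stronger than Cohen--Macaulayness and fails in general: the path with vertices $\{1,2,3,4\}$ and edges $12,23,34$ is Cohen--Macaulay, yet its induced subcomplex on $\{1,2,4\}$ is one-dimensional and disconnected, so $\tilde{H}_0\neq 0$ below the top dimension. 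What is actually true (Reisner's criterion) is that \emph{links} of faces in a Cohen--Macaulay complex have vanishing reduced homology below their dimension; induced subcomplexes inherit no such vanishing, and nothing prevents the nontrivial class you start with in $\lk(\sigma,X)[B]$ from surviving in $Y_0$. So the hypothesis of \Cref{thm-MayerVietoris} is not verified and the argument does not close. (Your worry about $k=0$ is real but secondary; the paper simply quotes \cite[Theorem 1.5]{LerayResult} for that case.)

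This is precisely why the paper's proof does not restrict to $X[B\cup V(\sigma)]$. It instead uses \cite[Lemma 2.3]{LerayResult}, which detects the Leray number through links rather than induced subcomplexes: if $\cL(\lk(\sigma,X))=d$, there is a face $\tau\in\lk(\sigma,X)$ with $\tilde{H}_{d-1}(\lk(\tau,\lk(\sigma,X)))\neq 0$. Passing to $\lk(\tau,X)$ preserves exactly the structure your construction loses: by \Cref{lem:linkdeletionface}, $\del(\sigma,\lk(\tau,X))=\lk(\tau,\del(\sigma,X))$ is a link in a Cohen--Macaulay complex and hence genuinely Cohen--Macaulay, and $\sigma$ remains a shedding $k$-face of $\lk(\tau,X)$; \Cref{SCM} then yields $\tilde{H}_{d+k}(\lk(\tau,X))\neq 0$, whence $\cL(X)\geq d+k+1$. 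To salvage your outline, replace ``induced subcomplex on $B$'' by ``link of a suitable face $\tau$'' throughout part (b); your part (a), which only needs Mayer--Vietoris together with part (b), then goes through.
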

\begin{proof}
The proof for the case $k=0$ follows from \cite[Theorem 1.5]{LerayResult}.

By \cite[Lemma 2.3]{LerayResult}, $\cL(X)\ge d$ if and only if $\tilde{\tH}_{d-1}(\lk(\gamma,X)) \neq 0$ for some  $\gamma \in X$.
Let $\cL(\lk(\sigma,X))=d$, then there exists a face $\tau \in \lk(\sigma,X)$ such that $\tilde{H}_{d-1}(\lk(\tau,\lk(\sigma,X) )) \neq 0$.

Since $\tau \cap \sigma =\emptyset$, by \Cref{lem:linkdeletionface}, $\lk(\tau,(\del(\sigma,X))=\del(\sigma,\lk(\tau,X))$. Furthermore, since the link of any simplex in a Cohen-Macaulay complex is again Cohen-Macaulay, the complex $\lk(\tau,(\del(\sigma,X))=\del(\sigma, \lk(\tau,X))$ is Cohen-Macaulay. Since the link of any face in a pure complex is again pure, it is easy to check that $\sigma$ is a shedding face for $\lk(\tau,X)$ as well. Since $\tilde{H}_{d-1}(\lk( \sigma,\lk(\tau,X)) \neq 0$, by \Cref{SCM}, we get that $\tilde{H}_{d+k}(\lk( \tau,X)) \neq 0$. This implies that $\cL(X)\geq d+k+1$. 

Now it is sufficient to prove that $\cL(X) \geq \cL(\del(\sigma, X))$. The proof is by induction on number of vertices in $X$. Let $Y = \del(\sigma, X) $. Let $A \subseteq V(Y)$. Then observe that  $Y[A] = \del(\sigma \cap A, X [A])$. By induction, $\cL(X[A]) \geq \cL(Y[A])$. Since $\cL(X) \geq \cL(X[A])$ by taking $A = V(Y)$, we get that 
$$
\cL(X) \geq \cL(X[A]) \geq \cL(Y[A]) = \cL(Y). 
$$
\end{proof}

We can now  prove \Cref{thm:kvdecomposable}.
 
\begin{proof}[Proof of \Cref{thm:kvdecomposable}]
We know that $\cL(X) \leq \C(X) \leq \M_k(X)\leq \M_k'(X)$. We will now prove that $\M_k'(X)\leq \cL(X)$ by induction on the number of $k$-faces of $X$. The base case is when the complex has only one $k$-face, {\it i.e.,} the complex is a simplex. In this case $\cL(X)=0=\M_k'(X)$. Since $X$ is $k$-vertex decomposable, \Cref{lem:existance_of_nonconeface} implies that $X_{(k)}^o\neq \emptyset$ and any shedding $k$-face is in $X_{(k)}^o$. Also, since $X$ is $k$-vertex decomposable there exists a  $k$-dimensional shedding face $\sigma$ of $X$ such that $\sigma \in X_{(k)}^o$ and $\del(\sigma, X)$ is a pure $k$-vertex decomposable complex and therefore Cohen-Macaulay. From Proposition \ref{sheddingface} we get that  $\cL(X)\geq \max\{\cL(\del( \sigma,X), \cL(\lk(\sigma,X) + k+1\}$. Thus, from \Cref{definition:theta_k}, we have that 
\begin{equation*}
\begin{split}
    \M_k'(X)& \leq \max\{\M_k'(\del(\sigma,X)),\M_k'(\lk(\sigma,X))+k+1 \}\\
 & \leq \max\{\cL(\del(\sigma,X)),\cL(\lk(\sigma,X))+k+1 \}\\
&\leq \cL(X).
\end{split}    
\end{equation*}
Here, the second inequality follows from induction.
\end{proof}


The following can be easily inferred from \Cref{thm:kvdecomposable} and the fact that $k$-vertex decomposability implies shellability.
\begin{remark}
If $X$ is $k$-dimensional pure complex and $\M_k(X) \neq \C(X)$, then $X$ is not shellable. 
\end{remark}

We now show that the number $d(X, \prec)$ produced by minimal exclusion sequence is also an upper bound for $\M_{0}(X)$. The proof of \cite[Theorem 6]{Lew18} can be modified to show that  $\M_0(X)$ is bounded above by $d(X, \prec)$.

\begin{proposition}\label{thm:theta_mes}
Let $X$ be a simplicial complex, then  $\M_0(X) \leq d(X,\prec) $. 
\end{proposition}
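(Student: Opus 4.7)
The plan is to prove $\M_0(X) \leq d(X, \prec)$ by induction on $|V(X)|$, modifying the argument of \cite[Theorem~6]{Lew18} (which is Proposition 2.4 above). In the base case, $X$ is a simplex (including the empty complex), so $X^o = \emptyset$ and $\M_0(X) = 0$; on the other hand, $X$ is its own unique maximal simplex, hence $\mes(\gamma, \prec)$ is null for every $\gamma \in X$ and $d(X, \prec) = 0$ as well.

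For the inductive step, assume $X$ is not a simplex, so that $V(X) \setminus V(\gamma_1) \neq \emptyset$. I would let $v$ be the \emph{smallest} vertex (in the fixed linear order on $V(X)$) of $V(X) \setminus V(\gamma_1)$. Since $v \notin \gamma_1$, $v$ is not a cone point, and therefore $v \in X^o$. I would then equip $\del(v, X)$ and $\lk(v, X)$ with the induced linear orderings $\prec_{\del}$ and $\prec_{\lk}$ obtained by inheriting the index from $\prec$: each maximal simplex of $\del(v, X)$ is either some $\gamma_i$ with $v \notin \gamma_i$, or $\gamma_i \setminus \{v\}$ with $v \in \gamma_i$ whose deletion remains maximal; each maximal simplex of $\lk(v, X)$ is $\gamma_i \setminus \{v\}$ for some $i$ with $v \in \gamma_i$.

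The key technical content is the pair of inequalities
\[
d(\del(v, X), \prec_{\del}) \leq d(X, \prec) \qquad \text{and} \qquad d(\lk(v, X), \prec_{\lk}) \leq d(X, \prec) - 1.
\]
For the first, I would show $M(\gamma, \prec_{\del}) \subseteq M(\gamma, \prec)$ for each $\gamma \in \del(v, X)$ by induction on the step index of the minimal exclusion sequence. The crucial simplification is that $v \notin \gamma$ forces $\gamma \setminus \gamma_i = \gamma \setminus (\gamma_i \setminus \{v\})$, so the residual set at each step is independent of whether we ran the recipe in $X$ or in $\del(v, X)$, and each vertex selected in $\del(v, X)$ can be matched with one already selected in $X$. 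For the second, I would prove $M(\gamma, \prec_{\lk}) \cup \{v\} \subseteq M(\gamma \cup \{v\}, \prec)$ for every $\gamma \in \lk(v, X)$. The minimality of $v$ is exactly what forces $v$ to be the first entry of $\mes(\gamma \cup \{v\}, \prec)$: we have $v \notin \gamma_1$ and $v \leq w$ for every $w \in V(X) \setminus \gamma_1 \supseteq \gamma \setminus \gamma_1$, so $v = \min\bigl((\gamma \cup \{v\}) \setminus \gamma_1\bigr)$. Thus $v \in M(\gamma \cup \{v\}, \prec)$, while the inclusion $M(\gamma, \prec_{\lk}) \subseteq \gamma$ excludes $v$ from the left-hand set; combined with the stepwise tracking this yields the containment and hence the ``$-1$'' in the link bound.

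Applying the inductive hypothesis to the complexes $\del(v, X)$ and $\lk(v, X)$, each of which has strictly fewer vertices, one obtains
\[
\M_0(X) \leq \max\{\M_0(\lk(v, X)) + 1,\, \M_0(\del(v, X))\} \leq \max\{(d(X, \prec) - 1) + 1,\, d(X, \prec)\} = d(X, \prec),
\]
completing the induction. The principal difficulty I anticipate is in the link inequality: one must reconcile the ``reuse a previously chosen vertex versus take a fresh minimum'' dichotomy in the definition of $\mes$ between $\lk(v, X)$ (whose history is short) and $X$ (where additional choices made at steps indexed by $\{j : v \notin \gamma_j\}$ may intervene). The careful choice of $v$ as the minimum element outside $\gamma_1$ is precisely what forces $v$ into the $X$-sequence at step $1$, supplying the extra element needed to close the bound.
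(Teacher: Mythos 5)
Your proof is correct and follows essentially the same route as the paper: induction on the number of vertices, applying the recursive bound $\M_0(X) \le \max\{\M_0(\del(v,X)),\, \M_0(\lk(v,X))+1\}$ at the minimal vertex $v \notin \gamma_1$, together with the two inequalities $d(\del(v,X),\prec) \le d(X,\prec)$ and $d(\lk(v,X),\prec) \le d(X,\prec)-1$ extracted from the proof of \cite[Theorem 6]{Lew18}. The paper merely cites that argument for these inequalities, whereas you spell out the details (in particular the role of choosing $v$ minimal outside $\gamma_1$), but the structure is identical.
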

\begin{proof}
The proof is by induction on number of vertices of $X$. If $X$ is a simplex then $\M_0(X)=0 \leq d(X, \prec)$. 
If $X$ is not a simplex and therefore has at least one non-cone vertex $v$, then by definition $\M_0(X) \leq \max\{\M_0(\del(v, X)), \M_0(\lk(v, X))+1\}$. 

The argument in \cite[Theorem 6]{Lew18} shows that $$d(\lk(v, X), \prec)-1 \leq  d(X,\prec) \text{ and }d(\del(v, X), \prec) \leq d(X, \prec).$$ Hence the proof follows by induction.
\end{proof}

By using \Cref{equation:mainresult} and \Cref{thm:theta_mes},  \Cref{thm:bound for nc} can be restated as follows. 

\begin{theorem} \label{thm:mainresultM0}
Let $\H$ be a hypergraph with no isolated vertices. Then
\[C(\rNC(\H))\leq \M_0(\rNC(\H))\leq |V(\H)|- \gamma_i(\H)-1.\]
\end{theorem}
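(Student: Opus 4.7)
The plan is to obtain this as a direct corollary, chaining together three results that have been established earlier in the paper rather than constructing any new argument. The statement has the shape of a three-term inequality, and each of the two inequalities in it matches up exactly with a known bound.

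First, the left-hand inequality $\C(\rNC(\H)) \leq \M_0(\rNC(\H))$ is simply the $k = 0$ instance of \Cref{thetakprop}, which gives $\C(X) \leq \M_k(X)$ for every simplicial complex $X$ and every $k \geq 0$. No hypergraph-specific input is needed here.

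For the right-hand inequality, I would apply \Cref{thm:theta_mes} to $X = \rNC(\H)$ equipped with the specific linear order $\prec$ on facets that was defined just before \Cref{lem:mes_equal} (namely $\gamma \prec \gamma'$ iff $\overline{\gamma} <_L \overline{\gamma'}$). This yields
\[
\M_0(\rNC(\H)) \;\leq\; d(\rNC(\H), \prec).
\]
The bound $d(\rNC(\H), \prec) \leq |V(\H)| - \gamma_i(\H) - 1$ is exactly the content of \Cref{equation:mainresult}, which was established during the proof of \Cref{thm:bound for nc} using the minimum cover $D$ with $\gamma_{\overline{D}}(\H) = \gamma_i(\H)$, together with \Cref{lem:mes_equal} and \Cref{lem:neighbor_inequality}. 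Concatenating the three inequalities produces the desired chain.

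There is essentially no obstacle to overcome: the combinatorial heart of the argument (the minimal-exclusion-sequence estimate with respect to a minimum cover, and the verification that $\Psi_\tau \geq 1$ by reducing to $\tau' = \tau \cap D$) is already carried out inside the proof of \Cref{thm:bound for nc}, and the generic fact that $\M_0$ is dominated by $d(X,\prec)$ is handled once and for all by \Cref{thm:theta_mes}. Thus the proof reduces to a single line assembling these pieces, and the role of the theorem is essentially to record that the stronger invariant $\M_0(\rNC(\H))$ already fits in the gap between $\C(\rNC(\H))$ and the combinatorial upper bound $|V(\H)| - \gamma_i(\H) - 1$.
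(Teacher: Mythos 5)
Your proposal is correct and is precisely how the paper derives this theorem: the text introducing it says it follows ``by using \Cref{equation:mainresult} and \Cref{thm:theta_mes},'' and the left-hand inequality is the $k=0$ case of \Cref{thetakprop}, exactly as you assemble it. Nothing further is needed.
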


\begin{example}
Let $\H$ be the hypergraph whose edges are the maximal simplices of the complex $\Delta$ given in  \Cref{example1}. Observe that for each vertex $v$ of $\H$,
$\{v\}$ is a dominating set of  $\{1, 2, 3, 4, 5,6 \} \setminus \{v\}$, and the set $\{1, 2, 3, 4, 5, 6\}$ is not an independent set. Thus we conclude that $\gamma_i(\H) = 1$. Since the complement of each edge is an edge in $\H$, we see that the maximal simplices of $\rNC(\H)$ are precisely the edges of $\H$. Hence $\rNC(\H) = \Delta$. 
Therefore, $\M_0(\rNC(\H)) \geq  3$ and from our bound in \Cref{thm:mainresultM0}, we get that $\M_0(\H) \leq 4$.

\end{example}

\section{Future directions}

In \Cref{example1}, we gave a simplicial complex $X$ such that $\M_k(X) < \M_{k-1}(X)$ for $k=1$. However, we are unable to find examples for general $k$. Therefore, we pose the problem here. 

\begin{ques}Given a $k \geq 2$, does there exist a simplicial complex $X$ such that $\M_k(X) < \M_{k-1}(X)$?
\end{ques}

In \Cref{thm:kvdecomposable}, we proved that $\C(X) = \M_k(X)$, if $X$ is $k$-vertex decomposable. It would be interesting to find other classes of simplicial complexes for which $\M_k$ is equal to the collapsibility number. 
\begin{ques}
Classify simplicial complexes $X$ for which $\C(X)=\M_k(X)$.
\end{ques}



\section*{Acknowledgements}  The authors would like to thank Yusuf Civan for his suggestions and for pointing out an error in the earlier preprint of the article. We also thank the Department of Mathematics, Indian Institute of Technology Bombay, for hosting us, where the major part of this work was done. The second author is supported by the seed grant project IITM/SG/SMS/95 by IIT Mandi, India. The Start-up Research Grant SRG/2022/000314 from SERB, DST, India, partially supported the third author.

\bibliography{Collapsibility}
\bibliographystyle{plain}

\end{document}